\newcommand{\R}{\mathds R}
\newcommand{\dd}{\mathrm d}
\newcommand{\EP}{$c_0$EP}
\newcommand{\DA}{\mathrm{DA}}
\newcommand{\anul}{\mathrm o}
\newcommand{\upstar}[1]{{#1}^{\raise1pt\hbox{$\scriptscriptstyle*$}}}
\newcommand{\chilow}[1]{\chi_{\lower2pt\hbox{$\scriptstyle#1$}}}
\DeclareMathOperator{\Ker}{Ker}
\DeclareMathOperator{\der}{der}
\DeclareMathOperator{\io}{io}
\title[On extensions of $c_0$-valued operators]{On extensions of $\mathbf{c_0}$-valued operators}
\author{Claudia Correa}
\thanks{The first author is sponsored by FAPESP (Processo n.\ 2010/17853-8).}
\address{Departamento de Matem\'atica,\hfill\break\indent Universidade de S\~ao Paulo, Brazil}
\email{claudiac.mat@gmail.com}
\author{Daniel V. Tausk}
\address{Departamento de Matem\'atica,\hfill\break\indent Universidade de S\~ao Paulo, Brazil}
\email{tausk@ime.usp.br} \urladdr{http://www.ime.usp.br/\~{}tausk}
\subjclass[2010]{46B20,46E15,54F05}
\keywords{Banach spaces of continuous functions; extensions of bounded operators; compact lines}
\date{November 26th, 2012}
\begin{document}

\theoremstyle{plain}\newtheorem{teo}{Theorem}[section]
\theoremstyle{plain}\newtheorem{prop}[teo]{Proposition}
\theoremstyle{plain}\newtheorem{lem}[teo]{Lemma}
\theoremstyle{plain}\newtheorem{cor}[teo]{Corollary}
\theoremstyle{definition}\newtheorem{defin}[teo]{Definition}
\theoremstyle{remark}\newtheorem{rem}[teo]{Remark}
\theoremstyle{plain} \newtheorem{assum}[teo]{Assumption}
\theoremstyle{definition}\newtheorem{example}[teo]{Example}

\begin{abstract}
We study pairs of Banach spaces $(X,Y)$, with $Y\subset X$, for which the thesis of Sobczyk's theorem holds,
namely, such that every bounded $c_0$-valued operator defined in $Y$ extends to $X$. We are mainly concerned with
the case when $X$ is a $C(K)$ space and $Y\equiv C(L)$ is a Banach subalgebra of $C(K)$. The main
result of the article states that, if $K$ is a compact line and $L$ is countable, then every bounded $c_0$-valued operator defined in
$C(L)$ extends to $C(K)$.
\end{abstract}

\maketitle

\begin{section}{Introduction}

In this article we introduce a definition that is relevant for the investigation of two classical problems in the
theory of Banach spaces: the problem of extension of bounded operators and the problem of complementation of copies
of $c_0$ in Banach spaces. Given a Banach space $X$, we will say that a closed subspace $Y$
of $X$ has the {\em $c_0$-extension property\/} (briefly: \EP) in $X$ if every bounded operator $T:Y\to c_0$ admits
a bounded ($c_0$-valued) extension to $X$. Of course, $Y$ has the \EP\ in $X$ if, for example, $Y$ is complemented in $X$. Our main result (Theorem~\ref{thm:main}) concerns
the \EP\ for certain Banach subalgebras of a space $C(K)$. More precisely, it states that
a Banach subalgebra $C(L)$ of $C(K)$ always has the \EP\ in $C(K)$ when $L$ is countable and $K$ is a compact line. By a {\em compact line\/} we mean a linearly ordered set which is compact in the order topology. The problem
of complementation of Banach subalgebras of $C(K)$ spaces, with $K$ a compact line, was studied in a recent
article \cite{KK}.

Recall that $\ell_\infty$ is an {\em injective\/} Banach space,
i.e., for any Banach space $X$ and any closed subspace $Y$ of $X$, every bounded operator
$T:Y\to\ell_\infty$ admits a bounded extension to $X$. In particular, every isomorphic copy of $\ell_\infty$
in a Banach space is complemented. It is well-known \cite{Phillips} that the space $c_0$ is not injective:
namely, $c_0$ is not complemented in $\ell_\infty$. However, the celebrated theorem of Sobczyk
\cite{Sobczyk}, says that
$c_0$ is {\em separably injective}, i.e., if $X$ is a separable Banach space
and $Y$ is a closed subspace of $X$ then every bounded operator $T:Y\to c_0$ admits a bounded extension
to $X$. Using our terminology, this means that every closed subspace of a separable Banach space $X$ has
the \EP\ in $X$. In particular, every isomorphic copy of $c_0$ in a separable Banach space
is complemented. A Banach space in which every isomorphic copy of $c_0$ is complemented is often called a {\em Sobczyk's space}.

There are many possible directions in which one can search for generalizations
of Sobczyk's theorem: for instance, \cite{Molto, Patterson} one can look for classes of nonseparable Banach spaces in which
every isomorphic (or every isometric) copy of $c_0$ is complemented. One can also \cite{ArgyrosLondon}
replace the space $c_0$ with $c_0(\Gamma)$, where $\Gamma$ is an uncountable set, or \cite{c0sum} replace
$c_0$ with its vector-valued versions (i.e., $c_0$-type sums of Banach spaces).

Our approach is to investigate pairs of Banach spaces $(X,Y)$, with $Y\subset X$, for which the thesis of Sobczyk's theorem
holds. For instance, a simple adaptation (Proposition~\ref{thm:propVeech}) of Veech's proof of Sobczyk's
theorem \cite{Veech} shows that $Y$ has the \EP\ in $X$ if $X/Y$ is separable or if $X$ is weakly
compactly generated (WCG). We will say that $X$ has the (resp., {\em separable}) {\em $c_0$-extension property\/} if every (resp.,
separable) closed subspace of $X$ has the \EP\ in $X$. Thus, every WCG space has the \EP.
Note that if a separable Banach subspace $Y$ of a Banach space $X$ has the \EP\ in $X$ then every isomorphic copy
of $c_0$ in $Y$ is complemented in $X$. In particular, if $X$ has the separable \EP\ then $X$ is Sobczyk.

Another property of Banach spaces that is closely related to the $c_0$-extension property is the so called
separable complementation property (SCP). Recall that a Banach space $X$ is said to have the {\em separable complementation property\/} if every separable
Banach subspace of $X$ is contained in a complemented separable subspace of $X$. Clearly, if
$X$ has the SCP then $X$ has the separable \EP.

Of course, for any fixed Banach space $W$, one could define a ``$W$-extension property'', by replacing
$c_0$ with $W$ in the definition of $c_0$-extension property. For instance, in \cite{Zippin}, a Banach subspace $Y$ of $X$ is said
to be {\em almost complemented\/} in $X$ if, for every compact Hausdorff space $K$, every bounded
operator $T:Y\to C(K)$ admits a bounded extension to $X$. Since $c_0\cong C[0,\omega]$,
it is clear that if $Y$ is almost complemented in $X$ then $Y$ has the \EP\ in $X$.

The case $W=c_0$ has a nice feature: since $c_0$-valued bounded operators can be identified with weak*-null sequences of linear
functionals, we have that a Banach subspace $Y$ of $X$ has the \EP\ in $X$ if and only if every
weak*-null sequence $(\alpha_n)_{n\ge1}$ in $Y^*$ extends to a weak*-null sequence $(\tilde\alpha_n)_{n\ge1}$
in $X^*$ (i.e., $\tilde\alpha_n$ extends $\alpha_n$, for all $n\ge1$). With such formulation in terms
of linear functionals, the $c_0$-extension property for certain given Banach spaces
can be investigated using concrete representations
of their dual spaces and of weak*-convergence in those dual spaces
(see proof of Proposition~\ref{thm:propnotc0EP}). We note also
that a related (weaker) property has been investigated in \cite{JesusWeak, WZQ}: a Banach subspace
$Y$ of $X$ is said to be {\em weak*-extensible\/} in $X$ if every weak*-null sequence in $Y^*$ admits
a {\em subsequence\/} having a weak*-null extension to a sequence in $X^*$.

As mentioned earlier, we are particularly interested in studying the \EP\ for spaces of the form $C(K)$. (As
usual, $C(K)$ denotes the Banach space of real-valued continuous functions on the compact Hausdorff space $K$, endowed with the
supremum norm.) In order to study the question of whether a separable Banach subspace $X$ of a $C(K)$ space has the \EP\ in $C(K)$, one can look at the (separable) Banach subalgebra spanned by $X$: namely, if such Banach subalgebra
has the \EP\ in $C(K)$ then so does $X$. In particular, $C(K)$ has the separable \EP\ if and only if every separable Banach
subalgebra of $C(K)$ has the \EP\ in $C(K)$.
Thus, we are mainly interested in understanding the \EP\ for separable Banach subalgebras
in $C(K)$ spaces.

Given compact Hausdorff spaces $K$, $L$ and a continuous map $\phi:K\to L$, we denote by $\phi^*:C(L)\to C(K)$
the composition map $f\mapsto f\circ\phi$. The map $\phi^*$ is a Banach algebra homomorphism; if $\phi$ is onto then
$\phi^*$ is also an isometric embedding and therefore the range $\phi^*C(L)$ of $\phi^*$ can be identified with $C(L)$.
Recall that all Banach subalgebras (with unity) of $C(K)$ are of the form $\phi^*C(L)$, for some continuous surjective
map $\phi:K\to L$.

In \cite[Lemma~2.7]{KK} it is given a necessary and sufficient condition
for the complementation of $\phi^*C(L)$ in $C(K)$, if both $K$ and $L$ are compact lines and $\phi:K\to L$
is a continuous {\em increasing\/} surjection. It would be interesting to have a characterization
of Banach subalgebras $\phi^*C(L)$ of $C(K)$, with $K$ a compact line, having the \EP\ in $C(K)$.
Our main result Theorem~\ref{thm:main} is a step in this direction: we establish that $\phi^*C(L)$ always has the
\EP\ in $C(K)$ if $L$ is countable. We note that, under the same assumptions, it is not always true that $\phi^*C(L)$ is complemented in $C(K)$,
even when $\phi$ is increasing (Example~\ref{exa:bigode}).

In Theorem~\ref{thm:main}, we do not have to assume that the map $\phi$ be increasing.
In fact, under the assumption that $\phi$ be increasing, the proof that $\phi^*C(L)$ has the \EP\ in $C(K)$
is much simpler and we state this weaker result as Corollary~\ref{thm:casofacil}.
We observe that the assumption that $L$ be countable cannot be replaced with the weaker
assumption that $L$ be metrizable: namely, in Proposition~\ref{thm:propnotc0EP}, we give an example of a continuous
increasing surjection $\phi:K\to L$, with $L$ metrizable, such that $\phi^*C(L)$
does not have the \EP\ in $C(K)$.

\vspace{5pt}

The key result used in the proof of Theorem~\ref{thm:main} is Theorem~\ref{thm:juntaLi} which states, roughly,
that the class of spaces $L$ for which the thesis of Theorem~\ref{thm:main} holds is closed under the operation
of taking the Alexandroff compactification of topological sums.
It turns out that, using Theorem~\ref{thm:juntaLi}, one can obtain a stronger version of Theorem~\ref{thm:main}: namely, the assumption that $L$ be countable can be replaced with the (weaker) assumption that $L$ be scattered and hereditarily paracompact (Remark~\ref{thm:herparacomp}).

\vspace{5pt}

The article is organized as follows. In Section~\ref{sec:prelim} we prove a slight generalization of Sobczyk's theorem (Proposition~\ref{thm:propVeech})
and a technical lemma (Lemma~\ref{thm:technical}) that will be used in Section~\ref{sec:main}. In Subsection~\ref{sub:CK},
we focus on Banach spaces of the form $C(K)$, giving a sufficient condition for a subspace to have the
\EP\ in $C(K)$ (Lemma~\ref{thm:resultadodoF}). Finally, Section~\ref{sec:main} is devoted to the proof of the main result of this article
(Theorem~\ref{thm:main}). The proof of Theorem~\ref{thm:juntaLi}, which is the hard work, is presented in Subsection~\ref{sub:hell}.

\end{section}

\begin{section}{The $c_0$-extension property}
\label{sec:prelim}

In what follows, $X$ and $Y$ always denote (real) Banach spaces. We denote by $\mathcal B(X,Y)$ the space
of bounded linear operators $T:X\to Y$.

\begin{defin}\label{thm:defc0EP}
Given $\lambda\ge1$, we say that a closed subspace $Y$ of $X$ has the {\em $c_0$-extension property with constant $\lambda$\/} (briefly: $\lambda$-\EP)
in $X$ if every bounded operator $T\in\mathcal B(Y,c_0)$ admits an extension $T'\in\mathcal B(X,c_0)$ with $\Vert T'\Vert\le\lambda\Vert T\Vert$.
We say that $X$ has the (resp., {\em separable}) {\em $c_0$-extension property with constant $\lambda$\/}
if every (resp., separable) closed subspace of $X$ has the $\lambda$-\EP\ in $X$.
\end{defin}

We note that if $Y$ has the \EP\ in $X$ then $Y$ has the $\lambda$-\EP\ in $X$ for some $\lambda\ge1$. Namely, if $Y$ has the \EP\
in $X$ then the restriction map $r:\mathcal B(X,c_0)\to\mathcal B(Y,c_0)$ is onto and thus it induces an isomorphism
$\bar r:\mathcal B(X,c_0)/\Ker(r)\to\mathcal B(Y,c_0)$; any $\lambda>\Vert\bar r^{-1}\Vert$ does the job.

\medskip

Recall that a Banach space $X$ is {\em weakly compactly generated\/} (WCG) if it admits a weakly compact subset $K$
that is linearly dense in $X$, i.e., the span of $K$ is dense in $X$. The following generalization of Sobczyk's theorem is folkloric, but we
present the proof for the sake of completeness.
\begin{prop}\label{thm:propVeech}
Let $X$ be a Banach space. Then:
\begin{itemize}
\item[(a)] if $Y$ is a closed subspace of $X$ such that $X/Y$ is separable then $Y$ has the $2$-\EP\ in $X$;
\item[(b)] if $X$ is WCG then $X$ has the $2$-\EP.
\end{itemize}
\end{prop}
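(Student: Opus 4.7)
I would follow Veech's classical proof of Sobczyk's theorem, adapted to each of the two settings. Fix $T \in \mathcal{B}(Y, c_0)$ and identify $T$ with a weak*-null sequence $(\alpha_n) \subset Y^*$ having $\sup_n \Vert\alpha_n\Vert \le c := \Vert T\Vert$. By Hahn--Banach I extend each $\alpha_n$ to $\beta_n \in X^*$ with $\Vert\beta_n\Vert \le c$. The plan is to construct $\gamma_n \in Y^\perp \cap cB^*_{X^*}$ such that $\tilde\alpha_n := \beta_n - \gamma_n$ is weak*-null in $X^*$; then $\Vert\tilde\alpha_n\Vert \le 2c$ and $\tilde\alpha_n|_Y = \alpha_n$, so $T'x := (\tilde\alpha_n(x))_n$ is an extension $T' \in \mathcal{B}(X,c_0)$ of $T$ with $\Vert T'\Vert \le 2c$.

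For (a), I would pick $(x_m) \subset X$ with $\Vert x_m\Vert \le 2$ and $(\pi(x_m))$ dense in the unit ball of $Z := X/Y$, where $\pi : X \to Z$ is the quotient map, and introduce the pseudo-metric
\[
d(\beta, \beta') := \sum_{m=1}^\infty 2^{-m}\min\bigl(1,\, |\beta(x_m) - \beta'(x_m)|\bigr)
\]
on $X^*$. Via the isometric identification $\pi^* : Z^* \to Y^\perp$, the restriction of $d$ to $Y^\perp \cap cB^*_{X^*}$ is a metric inducing the weak*-topology; the set is thus weak*-compact metric, so a minimizer $\gamma_n$ of $\gamma \mapsto d(\beta_n, \gamma)$ over $Y^\perp \cap cB^*_{X^*}$ exists. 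The heart of the proof is the claim that $d(\beta_n, \gamma_n) \to 0$: arguing by contradiction, a subsequence $(\beta_{n_k})$ would satisfy $d(\beta_{n_k}, \cdot) \ge \varepsilon$ on $Y^\perp \cap cB^*_{X^*}$; by diagonal extraction I pass to a sub-subsequence with $\beta_{n_k}(x_m) \to L_m$ for each $m$, and check that $\pi(x_m) \mapsto L_m$ gives a well-defined linear functional on $\mathrm{span}\{\pi(x_m)\}$ of norm at most $c$. Well-definedness uses that $\sum_i \lambda_i \pi(x_{m_i}) = 0$ forces $\sum_i \lambda_i x_{m_i} \in Y$, so $\sum_i \lambda_i L_{m_i} = \lim_k \alpha_{n_k}\bigl(\sum_i \lambda_i x_{m_i}\bigr) = 0$ by the weak*-nullity of $(\alpha_n)$; boundedness comes from $|\beta_{n_k}(u)| \le c\Vert u - y\Vert$ for every $y \in Y$. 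Hahn--Banach then extends to $\tilde L \in Z^*$ with $\Vert\tilde L\Vert \le c$, and $\pi^*\tilde L \in Y^\perp \cap cB^*_{X^*}$ satisfies $d(\beta_{n_k}, \pi^*\tilde L) \to 0$, a contradiction. Weak*-nullity of $\tilde\alpha_n$ on $X$ is then verified by approximating any $x \in X$ as $x = x_m + y + r$ with $y \in Y$ and $\Vert r\Vert$ small, using $d(\beta_n, \gamma_n) \to 0$ and the weak*-nullity of $(\alpha_n)$ on $Y$.

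For (b), I would run the analogous construction, but since $X/Y$ need not be separable, $Y^\perp \cap cB^*_{X^*}$ may fail to be weak*-metrizable and the pseudo-metric $d$ above is no longer available as stated. Instead one invokes the WCG hypothesis on $X$ to conclude that $B^*_{X^*}$ in the weak*-topology is Eberlein compact and therefore angelic, so that Eberlein--Smulian provides the extraction of weak*-convergent subsequences in place of metrizability. With this replacement the construction of $\gamma_n$ and the contradiction step carry through. The main obstacle in both parts is the diagonal step that produces the contradicting functional $\tilde L \in Y^\perp$ with norm at most $c$, which is what ultimately delivers the constant $2$.
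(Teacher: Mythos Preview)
Your argument for (a) is correct and is essentially Veech's proof, matching the paper's approach. The paper streamlines the contradiction step: it observes that every weak*-cluster point of the Hahn--Banach extensions lies in $F=Y^\perp\cap cB_{X^*}$, that the distance function $d(\cdot,F)$ is weak*-continuous on $cB_{X^*}$, and hence $d(\beta_n,F)\to0$ directly by compactness, without the explicit diagonalization you carry out. But the content is the same.

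Part (b), however, has a genuine gap. You say that once the pseudometric $d$ is unavailable you can substitute angelicity of $(B_{X^*},w^*)$ so that ``the construction of $\gamma_n$ and the contradiction step carry through.'' Angelicity does give you weak*-sequential compactness, which indeed replaces the diagonal extraction in the \emph{contradiction} step. But it does nothing for the \emph{construction} step: in (a) your $\gamma_n$ was defined as a minimizer of $d(\beta_n,\cdot)$ over $F$, and without a metric there is no candidate for $\gamma_n$ at all, nor any meaning for ``$d(\beta_n,\gamma_n)\to0$''. Knowing that every subsequence of $(\beta_n)$ has a further subsequence converging into $F$ does not produce a single sequence $(\gamma_n)\subset F$ with $\beta_n-\gamma_n\to0$ weak*; that is a genuinely stronger statement, and angelicity alone does not deliver it.

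The paper handles (b) by a completely different, and much shorter, reduction to (a). Given $T:Y\to c_0$, extend it (by injectivity of $\ell_\infty$) to $S:X\to\ell_\infty$ with $\Vert S\Vert=\Vert T\Vert$. Then $X/\Ker(S)$ injects boundedly into $\ell_\infty$, so it has weak*-separable dual; since quotients of WCG spaces are WCG, and a WCG space with weak*-separable dual is separable, $X/\Ker(S)$ is separable, hence so is $X/S^{-1}[c_0]$. Now apply (a) to the pair $\big(X,\,S^{-1}[c_0]\big)$ to extend $S\vert_{S^{-1}[c_0]}$ to a $c_0$-valued operator on $X$ of norm at most $2\Vert T\Vert$. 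This bypasses entirely the need to select $\gamma_n$ in a nonmetrizable dual ball.
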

\begin{proof}
%
In order to prove (a), let $(\alpha_n)_{n\ge1}$ be a weak*-null sequence in $Y^*$. Without loss of generality,
we assume $\sup_{n\ge1}\Vert\alpha_n\Vert=1$. For each $n\ge1$, let $\tilde\alpha_n\in X^*$ be a Hahn--Banach
extension of $\alpha_n$. It is sufficient to find a sequence $(\beta_n)_{n\ge1}$ in $F=B_{\upstar X}\cap Y^\anul$
such that $(\tilde\alpha_n-\beta_n)_{n\ge1}$ is weak*-null. Note that every weak*-cluster point of
$(\tilde\alpha_n)_{n\ge1}$ is in $F$. Let $E$ be a countable subset of $X$ that is mapped
by the quotient map onto a dense subset of $X/Y$. Since $E$ is countable, there exists a
translation-invariant pseudometric $d$ on $X^*$ that induces the topology of convergence at the points of $E$.
The distance function $d(\cdot,F)$ is
$d$-continuous and therefore weak*-continuous. Since $B_{\upstar X}$ is weak*-compact and every weak*-cluster
point of $(\tilde\alpha_n)_{n\ge1}$ is in the zero-set of the map $d(\cdot,F)$, it follows
that $d(\tilde\alpha_n,F)\to0$. Pick $\beta_n\in F$ with $d(\tilde\alpha_n,\beta_n)<d(\tilde\alpha_n,F)+\frac1n$.
Then $d(\tilde\alpha_n-\beta_n,0)\to0$, i.e., $(\tilde\alpha_n-\beta_n)_{n\ge1}$ converges to zero at the points
of $E$. Since it also converges to zero at the points of $Y$ and since $E\cup Y$ is linearly dense in $X$,
it follows that the bounded sequence $(\tilde\alpha_n-\beta_n)_{n\ge1}$ is weak*-null.

To prove (b), let $Y$ be a closed subspace of $X$, $T:Y\to c_0$ be a bounded operator, and
$S:X\to\ell_\infty$ be an $\ell_\infty$-valued extension of $T$ with $\Vert S\Vert=\Vert T\Vert$.
Since $X/\Ker(S)$ admits a bounded linear injection to $\ell_\infty$, it has a weak*-separable dual; then $X/\Ker(S)$
is separable, because it is a WCG space with a weak*-separable dual. In particular, $X/S^{-1}[c_0]$ is
separable and, by (a), the map $S\vert_{S^{-1}[c_0]}$ has a $c_0$-valued extension $T':X\to c_0$
with $\Vert T'\Vert\le2\Vert S\Vert$. Clearly, $T'$ extends $T$ and $\Vert T'\Vert\le2\Vert T\Vert$.
\end{proof}

Note that having the $\lambda$-\EP\ is a hereditary property (yet WCG is not \cite{WCGnother}) and thus
Proposition~\ref{thm:propVeech} implies that every Banach subspace of a WCG Banach space has the $2$-\EP.
Also, in view of Proposition~\ref{thm:propVeech}, it is natural to ask whether $Y$ has the \EP\ in $X$ if $X/Y$ is
WCG. We will see in Remark~\ref{thm:remquocWCG} below that the answer is negative.

\vspace{5pt}

Clearly, a closed subspace $Y$ of $X$ has the $\lambda$-\EP\ in $X$ if and only if every weak*-null sequence
$(\alpha_n)_{n\ge1}$ in $Y^*$ extends to a weak*-null sequence $(\tilde\alpha_n)_{n\ge1}$ in $X^*$ with
$\sup_{n\ge1}\Vert\tilde\alpha_n\Vert\le\lambda\,\sup_{n\ge1}\Vert\alpha_n\Vert$. In the next lemma,
we show that, if $Y$ has the $\lambda$-\EP\ in $X$, then the weak*-null extension $(\tilde\alpha_n)_{n\ge1}$
can in fact be chosen with $\Vert\tilde\alpha_n\Vert\le\lambda\Vert\alpha_n\Vert$, for each $n$.
Moreover, in order to establish that $Y$ has the $\lambda$-\EP\ in $X$, it is sufficient to consider weak*-null sequences $(\alpha_n)_{n\ge1}$ in $Y^*$ that are normalized.
We say that a family in a Banach space is {\em normalized\/} if all of its members have norm $1$.
\begin{lem}\label{thm:technical}
Let $X$ be a Banach space, $Y$ be a closed subspace of $X$ and $\lambda\ge1$. The following statements are equivalent:
\begin{itemize}
\item[(a)] $Y$ has the $\lambda$-\EP\ in $X$;
\item[(b)] every normalized weak*-null sequence $(\alpha_n)_{n\ge1}$ in $Y^*$ extends to a weak*-null
sequence $(\tilde\alpha_n)_{n\ge1}$ in $X^*$ with $\Vert\tilde\alpha_n\Vert\le\lambda$, for all $n\ge1$;
\item[(c)] every weak*-null sequence $(\alpha_n)_{n\ge1}$ in $Y^*$ extends to a weak*-null sequence
$(\tilde\alpha_n)_{n\ge1}$ in $X^*$ with $\Vert\tilde\alpha_n\Vert\le\lambda\Vert\alpha_n\Vert$, for all $n\ge1$.
\end{itemize}
\end{lem}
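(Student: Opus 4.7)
The plan is to prove the cyclic implications (c) $\Rightarrow$ (a) $\Rightarrow$ (b) $\Rightarrow$ (c), with essentially all the work in the last one. The first two implications will be formal consequences of the identification (mentioned just above the lemma) between a bounded operator $T\in\mathcal B(Y,c_0)$ and the weak*-null sequence $(\alpha_n)_{n\ge1}$ in $Y^*$ given by $T(y)=(\alpha_n(y))_{n\ge1}$, under which $\Vert T\Vert=\sup_n\Vert\alpha_n\Vert$. Through this identification, (c) gives (a) by taking suprema, and (a) specialized to operators of norm one yields (b), since each coordinate of the extension $T'$ has norm at most $\Vert T'\Vert\le\lambda$.

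The substantive step is (b) $\Rightarrow$ (c). Starting from a weak*-null sequence $(\alpha_n)_{n\ge1}$ in $Y^*$, let $M:=\sup_n\Vert\alpha_n\Vert$; the case $M=0$ is trivial (take $\tilde\alpha_n=0$), so I assume $M>0$. The naive attempt of normalizing each nonzero $\alpha_n$ and invoking (b) fails, because $(\alpha_n/\Vert\alpha_n\Vert)_n$ need not be weak*-null when $\Vert\alpha_n\Vert\to0$. My plan is to circumvent this with a dyadic magnitude decomposition: for each $k\ge0$ put
\[
N_k=\bigl\{n:2^{-k-1}M<\Vert\alpha_n\Vert\le 2^{-k}M\bigr\},
\]
which partitions $\{n:\alpha_n\ne 0\}$. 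On each infinite $N_k$ the scaling factors $\Vert\alpha_n\Vert^{-1}$ are bounded by $2^{k+1}M^{-1}$, hence $(\alpha_n/\Vert\alpha_n\Vert)_{n\in N_k}$ (enumerated in increasing order of $n$) is a genuine normalized weak*-null sequence in $Y^*$; applying (b) to it and multiplying the resulting weak*-null extension back by $\Vert\alpha_n\Vert$ produces $\tilde\alpha_n\in X^*$ extending $\alpha_n$ with $\Vert\tilde\alpha_n\Vert\le\lambda\Vert\alpha_n\Vert$ and with $(\tilde\alpha_n)_{n\in N_k}$ weak*-null in $X^*$. For finite $N_k$, or for indices with $\alpha_n=0$, I simply use Hahn--Banach norm-preserving extensions (resp.\ zero).

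To finish, I have to check that the reassembled sequence $(\tilde\alpha_n)_{n\ge1}$ is weak*-null as a whole. Given $x\in X$ and $\epsilon>0$, pick $K$ so large that $\lambda 2^{-K}M\Vert x\Vert<\epsilon$: this forces $|\tilde\alpha_n(x)|\le\lambda\Vert\alpha_n\Vert\Vert x\Vert<\epsilon$ for every $n$ lying in some $N_k$ with $k\ge K$. For each of the finitely many remaining values $k<K$, the already-established weak*-nullity of $(\tilde\alpha_n)_{n\in N_k}$ supplies a threshold past which $|\tilde\alpha_n(x)|<\epsilon$ on $N_k$; taking the maximum of these finitely many thresholds yields the required threshold for the full sequence.

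The only real obstacle here is recognizing that direct normalization destroys weak*-nullity and that a magnitude-based blocking is what refines the uniform bound of (b) into the pointwise bound of (c). Once the dyadic decomposition is in place, the remaining verifications are routine bookkeeping.
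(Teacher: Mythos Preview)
Your proof is correct and follows essentially the same approach as the paper's: the paper also handles (b)$\Rightarrow$(c) by partitioning the indices into magnitude bins, normalizing within each infinite bin, applying (b), rescaling, and then gluing via a tail/head argument. The only cosmetic difference is that the paper uses the bins $\frac1{k+1}<\Vert\alpha_n\Vert\le\frac1k$ (after normalizing so that $\sup_n\Vert\alpha_n\Vert=1$) instead of your dyadic bins, which changes nothing of substance.
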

\begin{proof}
The implications (a)$\Rightarrow$(b) and (c)$\Rightarrow$(a) are trivial, so it remains to prove
(b)$\Rightarrow$(c). Let $(\alpha_n)_{n\ge1}$ be a weak*-null sequence in $Y^*$. Without loss of generality,
assume $\sup_{n\ge1}\Vert\alpha_n\Vert=1$. Set $\beta_n=\frac{\alpha_n}{\Vert\alpha_n\Vert}$ when $\alpha_n\ne0$.
For each integer $k\ge1$, let $N_k$ denote the set of all integers
$n\ge1$ such that $\frac1{k+1}<\Vert\alpha_n\Vert\le\frac1k$. If, for a given $k$, the set $N_k$ is infinite,
then $(\beta_n)_{n\in N_k}$ is a normalized weak*-null sequence in $Y^*$; by (b), it
extends to a weak*-null sequence $(\tilde\beta_n)_{n\in N_k}$ in $X^*$ with $\Vert\tilde\beta_n\Vert\le\lambda$,
for all $n\in N_k$.

We now define the sequence $(\tilde\alpha_n)_{n\ge1}$
in $X^*$ as follows. If $\alpha_n=0$, set $\tilde\alpha_n=0$. If $n\in N_k$ and $N_k$ is finite, let $\tilde\alpha_n\in X^*$
be a Hahn--Banach extension of $\alpha_n$. If $n\in N_k$ and $N_k$ is infinite, set
$\tilde\alpha_n=\Vert\alpha_n\Vert\tilde\beta_n$. Clearly, $\tilde\alpha_n$ extends $\alpha_n$
and $\Vert\tilde\alpha_n\Vert\le\lambda\Vert\alpha_n\Vert$, for all $n\ge1$. Moreover, if $N_k$ is infinite,
$(\tilde\alpha_n)_{n\in N_k}$ is weak*-null. Let us show that $(\tilde\alpha_n)_{n\ge1}$
is weak*-null. Fix $x\in X$ with $\Vert x\Vert=1$ and let $\varepsilon>0$ be given. Choose $k_0\ge1$ with
$\frac\lambda{k_0}<\varepsilon$. For $k\ge k_0$ and $n\in N_k$, we have $\vert\tilde\alpha_n(x)\vert
\le\lambda\Vert\alpha_n\Vert<\varepsilon$. Clearly, for $n\in\bigcup_{k<k_0}N_k$ sufficiently large,
we have $\vert\tilde\alpha_n(x)\vert<\varepsilon$. This concludes the proof.
\end{proof}

\begin{cor}\label{thm:eraremark}
Let $X$ be a Banach space, $Y$ be a closed subspace of $X$ and $\lambda\ge1$.
Assume that every normalized weak*-null sequence $(\alpha_n)_{n\ge1}$ in $Y^*$ extends to a weak*-null sequence
$(\tilde\alpha_n)_{n\ge1}$ in $X^*$ such that:
\begin{equation}\label{eq:limsup}
\limsup_{n\to+\infty}\Vert\tilde\alpha_n\Vert\le\lambda.
\end{equation}
Then, for any $\varepsilon>0$, $Y$ has the $(\lambda+\varepsilon)$-\EP\ in $X$.
\end{cor}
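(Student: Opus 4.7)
The plan is to reduce the problem to condition (b) of Lemma \ref{thm:technical} with constant $\lambda+\varepsilon$, and then modify a prescribed extension on finitely many indices so that the norm bound $\lambda+\varepsilon$ holds for \emph{every} $n$, not just eventually.

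More precisely, fix a normalized weak*-null sequence $(\alpha_n)_{n\ge1}$ in $Y^*$. By hypothesis, there is a weak*-null extension $(\tilde\alpha_n)_{n\ge1}$ in $X^*$ satisfying \eqref{eq:limsup}. From the definition of $\limsup$, we can pick an index $N$ such that $\Vert\tilde\alpha_n\Vert\le\lambda+\varepsilon$ for every $n\ge N$. For the finitely many indices $n<N$, I would replace $\tilde\alpha_n$ by a Hahn--Banach extension $\tilde\alpha_n'$ of $\alpha_n$ having $\Vert\tilde\alpha_n'\Vert=\Vert\alpha_n\Vert=1$; for $n\ge N$, set $\tilde\alpha_n'=\tilde\alpha_n$. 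Since $\lambda\ge1$, the resulting sequence satisfies $\Vert\tilde\alpha_n'\Vert\le\lambda+\varepsilon$ for every $n\ge1$, and it is still weak*-null because it differs from $(\tilde\alpha_n)_{n\ge1}$ in only finitely many terms, which does not affect weak*-convergence to $0$. Moreover, $\tilde\alpha_n'$ extends $\alpha_n$ for each $n$.

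Thus the hypothesis of condition (b) of Lemma~\ref{thm:technical} is verified with the constant $\lambda+\varepsilon$ in place of $\lambda$. Applying the equivalence (b)$\Leftrightarrow$(a) of that lemma yields that $Y$ has the $(\lambda+\varepsilon)$-\EP\ in $X$, as desired.

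There is no real obstacle here; the only thing to be mildly careful about is that the modification is on finitely many indices so that weak*-nullity survives, and that $\lambda\ge1$ ensures the Hahn--Banach terms of norm $1$ still fit under the bound $\lambda+\varepsilon$.
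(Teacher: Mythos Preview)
Your proof is correct and follows essentially the same approach as the paper's own proof: use the $\limsup$ bound to get $\Vert\tilde\alpha_n\Vert\le\lambda+\varepsilon$ for all $n$ past some index, replace the finitely many earlier extensions by Hahn--Banach extensions (of norm $1\le\lambda+\varepsilon$), and invoke Lemma~\ref{thm:technical}(b). You have simply spelled out in detail what the paper states tersely.
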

\begin{proof}
Inequality \eqref{eq:limsup} implies that $\Vert\tilde\alpha_n\Vert\le\lambda+\varepsilon$, for all $n$ greater
than some $n_0$. Now note that we can replace $\tilde\alpha_n$ with a Hahn--Banach extension of $\alpha_n$,
for $n\le n_0$.
\end{proof}

\subsection{The $\mathbf{c_0}$-extension property for $\mathbf{C(K)}$ spaces}\label{sub:CK}
In what follows, $K$ and $L$ denote compact Hausdorff spaces. We always identify the dual space
of $C(K)$ with the space $M(K)$ of finite countably-additive signed regular Borel measures on $K$,
endowed with the total variation norm $\Vert\mu\Vert=\vert\mu\vert(K)$. Given a point $p\in K$,
we denote by $\delta_p\in M(K)$ the probability measure with support $\{p\}$. Note that if $\phi:K\to L$
is a continuous map then the adjoint of the composition map $\phi^*:C(L)\to C(K)$ is the push-forward
map $\phi_*:M(K)\to M(L)$ defined by $\phi_*(\mu)(B)=\mu\big(\phi^{-1}[B]\big)$, for any $\mu\in M(K)$
and any Borel subset $B$ of $L$. When $\phi$ is onto, we can identify $C(L)$ with the subspace $\phi^*C(L)$ of $C(K)$
and then an extension of $\mu\in M(L)\equiv C(L)^*$ to $C(K)$ is identified with a measure $\tilde\mu\in M(K)$
such that $\phi_*\tilde\mu=\mu$.

\vspace{5pt}

The following lemma gives a simple positive criterion for a Banach subspace of $C(K)$ to have
\EP\ in $C(K)$. Recall that $K$ is said to be an {\em Eberlein compact\/} if $K$ is homeomorphic to a weakly
compact subset of some Banach space. Moreover, $K$ is Eberlein if and only if $C(K)$ is WCG.
\begin{lem}\label{thm:resultadodoF}
Let $K$, $L$ be compact Hausdorff spaces, $\phi:K\to L$ be a continuous map and assume that there exists
an Eberlein compact subset $F$ (for instance, a metrizable closed subset) of $K$ such that $\phi\vert_F$ is onto.
Then every closed subspace of $\phi^*C(L)$ has the $2$-\EP\ in $C(K)$.
\end{lem}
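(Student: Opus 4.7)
The plan is to relay the extension problem from $C(K)$ to $C(F)$, where the already-established WCG case applies. The key observation is that the restriction map $R\colon C(K)\to C(F)$, $f\mapsto f|_F$, is a norm-one operator and, more importantly, it is \emph{isometric} on the subalgebra $\phi^*C(L)$: indeed, for $h\in C(L)$, the assumption that $\phi|_F$ maps $F$ onto $L$ gives $\|h\circ\phi\|_K=\|h\|_L=\|h\circ\phi|_F\|_F$. Consequently, $R$ restricts to an isometric embedding of $\phi^*C(L)$ into $C(F)$, and in particular of any closed subspace $Y\subseteq\phi^*C(L)$ onto a closed subspace $Y'=R(Y)\subseteq C(F)$ with $R|_Y\colon Y\to Y'$ an isometric isomorphism.

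Now, because $F$ is Eberlein, $C(F)$ is WCG, so by Proposition~\ref{thm:propVeech}(b), every closed subspace of $C(F)$ has the $2$-\EP\ in $C(F)$; in particular, $Y'$ does. Given $T\in\mathcal B(Y,c_0)$, transport it through the isometry to obtain $T'\in\mathcal B(Y',c_0)$ defined by $T'(R(y))=T(y)$, with $\|T'\|=\|T\|$. Apply the $2$-\EP\ to extend $T'$ to some $\tilde T\in\mathcal B(C(F),c_0)$ with $\|\tilde T\|\le 2\|T'\|=2\|T\|$.

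Finally, compose to get the sought extension $\tilde T\circ R\colon C(K)\to c_0$. For $y\in Y$, we have $(\tilde T\circ R)(y)=\tilde T(R(y))=T'(R(y))=T(y)$, so it extends $T$; and $\|\tilde T\circ R\|\le\|\tilde T\|\,\|R\|\le 2\|T\|$, giving the $2$-\EP\ in $C(K)$. The step needing care is verifying the isometry of $R$ on $\phi^*C(L)$, which is really where the surjectivity hypothesis on $\phi|_F$ enters; once that is in hand, the remainder is a straightforward pullback of the WCG case already proved. The parenthetical about metrizable subsets is immediate since a compact metrizable space is Eberlein.
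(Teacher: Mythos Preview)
Your proof is correct and follows essentially the same approach as the paper: embed the given subspace isometrically into $C(F)$ via restriction (equivalently, via $(\phi|_F)^*$), invoke the WCG case (Proposition~\ref{thm:propVeech}(b)) to extend inside $C(F)$, and then precompose with the restriction map $\rho_F:C(K)\to C(F)$ to obtain the desired extension on $C(K)$. The only cosmetic difference is that the paper parametrizes the subspace as $\phi^*[X]$ for $X\subseteq C(L)$ and uses $(\phi|_F)^*$, while you work directly with $Y\subseteq\phi^*C(L)$ and the restriction $R$; these are the same embedding under the identification $R\circ\phi^*=(\phi|_F)^*$.
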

\begin{proof}
Consider a closed subspace $\phi^*[X]$ of $\phi^*C(L)$, where $X$ is a closed subspace of $C(L)$.
We have to show that for any $T\in\mathcal B(X,c_0)$, there exists $T'\in\mathcal B\big(C(K),c_0\big)$ with
$T'\circ\phi^*\vert_X=T$ and $\Vert T'\Vert\le2\Vert T\Vert$. Since $\phi\vert_F$ is onto, $(\phi\vert_F)^*$ embeds
$X$ isometrically into $C(F)$; by Proposition~\ref{thm:propVeech}, since $C(F)$ is WCG, for any $T\in\mathcal B(X,c_0)$
there exists $T_1\in\mathcal B\big(C(F),c_0\big)$ such that $T_1\circ(\phi\vert_F)^*\vert_X=T$ and
$\Vert T_1\Vert\le2\Vert T\Vert$. To conclude the proof, set $T'=T_1\circ\rho_F$, where $\rho_F:C(K)\to C(F)$
denotes the restriction map.
\end{proof}

The following corollary is weaker than our main result Theorem~\ref{thm:main} (except for the fact that we get $2$-\EP\
in the thesis of the corollary and $(2+\varepsilon)$-\EP\ in the thesis of Theorem~\ref{thm:main}).
However, since its proof is much simpler, we believe it's interesting to state it here.
\begin{cor}\label{thm:casofacil}
Let $K$ be a compact line, $L$ be a countable compact Hausdorff space and $\phi:K\to L$ be a continuous surjection.
Assume that, for each $p\in L$, $\phi^{-1}(p)$ is a countable union of intervals of $K$ (this is the case,
for instance, if $L$ is a compact line and $\phi$ is increasing). Then $\phi^*C(L)$ has the $2$-\EP\ in $C(K)$.
\end{cor}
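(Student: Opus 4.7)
My plan is to reduce the problem, via Lemma~\ref{thm:resultadodoF}, to producing a metrizable closed subset $F\subseteq K$ with $\phi[F]=L$. Since every countable compact Hausdorff space is metrizable (a classical fact), it suffices to exhibit a countable closed $F\subseteq K$ with $\phi[F]=L$.

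I would construct $F$ fiberwise. Fix $p\in L$. Because $\phi^{-1}(p)$ is a closed subset of the compact line $K$, its maximal convex subsets (the ``convex components'' of $\phi^{-1}(p)$) partition it into pairwise disjoint closed convex subsets of $K$; the closedness follows because in a linearly ordered topological space the closure of a convex set is convex, so maximality forces each component to equal its closure. Compactness of $K$ then makes each component a closed interval $[a_{p,n},b_{p,n}]$ for some $a_{p,n}\le b_{p,n}$. The hypothesis that $\phi^{-1}(p)$ is a countable union of intervals forces only countably many components, since every component must contain at least one of the given intervals. Letting $n$ index these components and $p$ range over the countable set $L$, the set $F=\{a_{p,n},b_{p,n}:p\in L,\ n\ge 1\}$ is countable and visibly surjects onto $L$ under $\phi$.

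The crux of the argument is showing $F$ is closed. Given $x\in K\setminus F$, set $p=\phi(x)$ and let $[a_{p,n},b_{p,n}]$ be the unique component of $\phi^{-1}(p)$ containing $x$. Since $x$ is neither endpoint, $a_{p,n}<b_{p,n}$ and $x$ lies in the open interval $(a_{p,n},b_{p,n})$, which is an open neighborhood of $x$ in $K$. Any $y$ in that neighborhood satisfies $\phi(y)=p$ and, by the disjointness of the components of $\phi^{-1}(p)$, lies in no component of $\phi^{-1}(p)$ other than $[a_{p,n},b_{p,n}]$; for $y$ to belong to $F$ it would therefore have to be $a_{p,n}$ or $b_{p,n}$, which is impossible. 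Hence $(a_{p,n},b_{p,n})\subseteq K\setminus F$, so $F$ is closed. Then $F$ is a countable compact Hausdorff space, hence metrizable, and Lemma~\ref{thm:resultadodoF} delivers the $2$-\EP\ for $\phi^*C(L)$ in $C(K)$. I expect the main obstacle to be this closedness step: it rests on the compact-line-specific facts that convex components of a closed set are pairwise disjoint closed intervals and that convex closures remain convex, so that no component has endpoints of other components lying in its interior.
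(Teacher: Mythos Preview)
Your proof is correct and follows essentially the same approach as the paper's: build $F$ from the endpoints of the convex components of each fiber, note it is countable and surjects onto $L$, and observe that $K\setminus F$ is a union of open intervals so $F$ is closed (hence metrizable), whence Lemma~\ref{thm:resultadodoF} applies. The paper's argument is simply a terser version of yours.
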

\begin{proof}
For each $p\in L$, $\phi^{-1}(p)$ can be written as a countable disjoint union of closed intervals of $K$
(namely, the convex components of $\phi^{-1}(p)$); denote by $F_p$ the set of all endpoints of such closed
intervals and set $F=\bigcup_{p\in L}F_p$. Obviously, $F$ is countable and $\phi\vert_F$ is onto. Moreover,
$F$ is closed, since $K\setminus F$ is a union of open intervals of $K$.
\end{proof}

Using Lemma~\ref{thm:resultadodoF}, we easily obtain a class of compact spaces $K$ such that the space $C(K)$
has the separable \EP\ (and is, in particular, Sobczyk).
Recall that $K$ is {\em $\aleph_0$-monolithic\/} if every separable subspace of $K$ is second countable.
\begin{cor}
If $K$ is $\aleph_0$-monolithic then $C(K)$ has the separable $2$-\EP.
\end{cor}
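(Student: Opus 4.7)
The plan is to reduce to Lemma~\ref{thm:resultadodoF} by associating to any separable closed subspace $Y$ of $C(K)$ a suitable compact Hausdorff space $L$, continuous surjection $\phi:K\to L$, and metrizable ``section'' $F\subset K$ of $\phi$. First, let $A$ be the closed Banach subalgebra of $C(K)$ generated by $Y\cup\{1\}$. Since $Y$ is separable, so is $A$ (polynomials with rational coefficients in a countable dense subset of $Y$ are dense in $A$). As recalled in the introduction, every closed unital subalgebra of $C(K)$ is of the form $\phi^*C(L)$ for some continuous surjection $\phi:K\to L$; moreover, $\phi^*$ is an isometric isomorphism onto $A$, so $C(L)$ is separable, and therefore $L$ is metrizable.

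Next I would construct the required Eberlein compact $F\subset K$ with $\phi(F)=L$. Since $L$ is metrizable compact it is separable, so pick a countable dense subset $D_0\subset L$ and, using surjectivity of $\phi$, a countable set $D\subset K$ with $\phi(D)=D_0$. Let $F=\overline D\subset K$. The set $F$ is a separable closed subspace of $K$, so by $\aleph_0$-monolithicity it is second countable, hence a metrizable compact; in particular $F$ is Eberlein. Furthermore $\phi(F)$ is compact and contains the dense set $D_0$, so $\phi(F)=L$, as required.

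Finally, since $Y\subset A=\phi^*C(L)$ is a closed subspace and $\phi|_F:F\to L$ is onto with $F$ Eberlein, Lemma~\ref{thm:resultadodoF} directly yields that $Y$ has the $2$-\EP\ in $C(K)$. Since $Y$ was an arbitrary separable closed subspace, $C(K)$ has the separable $2$-\EP.

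I do not anticipate a serious obstacle: the only mild point is to check that the subalgebra generated by a separable subspace remains separable (routine) and that the closure of a countable preimage of a dense set is second countable and hence Eberlein (which is exactly where $\aleph_0$-monolithicity is used). Everything else is an immediate application of Lemma~\ref{thm:resultadodoF}.
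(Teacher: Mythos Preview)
Your proof is correct and follows essentially the same route as the paper: pass to the separable unital Banach subalgebra $\phi^*C(L)$ containing $Y$ (so $L$ is metrizable), take $F$ to be the closure of a countable set in $K$ mapped by $\phi$ onto a dense subset of $L$, use $\aleph_0$-monolithicity to get $F$ metrizable, and invoke Lemma~\ref{thm:resultadodoF}. The paper's proof is simply a terser version of what you wrote.
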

\begin{proof}
A closed separable subspace of $C(K)$ spans a separable Banach subalgebra $\phi^*C(L)$ of $C(K)$,
where $\phi:K\to L$ is continuous onto and $L$ is metrizable. Let $F$ be the closure of a countable subset of $K$
that is mapped by $\phi$ onto a dense subset of $L$. Then $F$ is metrizable and we are done.
\end{proof}

\begin{rem}
We observe that, for example, every scattered compact line is $\aleph_0$-monolithic. Namely, if $K$
is a scattered compact line and $F$ were a separable nonmetrizable closed subset of $K$, then
$F$ would be a separable nonmetrizable compact line and therefore there would exist a continuous surjection
$F\to[0,1]$ (\cite[Proof of Lemma~2.5]{KK}). This contradicts the fact that $F$ is scattered.
\end{rem}

Recall that the {\em double arrow space\/} is the set $\DA=[0,1]\times\{0,1\}$ endowed with the lexicographic order
and the order topology. The double arrow space is a compact line and the first projection $\pi_1:\DA\to[0,1]$ is a
continuous increasing surjection. It has long been known (\cite[Example~2]{Corson}, \cite[pg.\ 6]{Kubis}) that
$\pi_1^*C[0,1]$ is not complemented in $C(\DA)$. In Proposition~\ref{thm:propnotc0EP} below
we obtain a new proof of this fact, illustrating that the notion of $c_0$-extension property provides a technique for proving
that a subspace of a Banach space is not complemented. Proposition~\ref{thm:propnotc0EP} also shows that the thesis of
Corollary~\ref{thm:casofacil} does not hold if one only assumes that $L$ be metrizable (even in the case when $L$ is a compact line
and $\phi$ is increasing).

\begin{prop}\label{thm:propnotc0EP}
The subalgebra $\pi_1^*C[0,1]$ does not have the \EP\ in the Banach space $C(\DA)$.
\end{prop}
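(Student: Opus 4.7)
The plan is to produce a single normalized weak*-null sequence in $M[0,1]$ that cannot be lifted to a weak*-null sequence in $M(\DA)$ under the push-forward $\pi_{1*}$. By Lemma~\ref{thm:technical}, were the \EP\ to hold, some $\lambda \geq 1$ would furnish a uniform norm bound $\Vert\tilde\alpha_n\Vert \leq \lambda\Vert\alpha_n\Vert$ on extensions, and I would derive a contradiction from this hypothesis.

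The sequence is $\alpha_n := \delta_{a_n} - \delta_{b_n}$, where $(a_n,b_n)$ enumerates pairs of consecutive dyadic rationals of $[0,1]$ across all depths (e.g.\ $a_n = j/2^k$, $b_n = (j+1)/2^k$ for $n = 2^k + j$, $0 \leq j < 2^k$). Since $|a_n - b_n| \to 0$ and every $f \in C[0,1]$ is uniformly continuous, $\int f\,d\alpha_n \to 0$, so $\alpha_n \to 0$ weak* with $\Vert\alpha_n\Vert = 2$.

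The heart of the argument is the uncountable family of ``jump'' indicators $\chi_t := \mathds{1}_{[(t,1),(1,1)]} \in C(\DA)$ for $t \in [0,1)$; these are continuous because $[(t,1),(1,1)]$ and its complement $[(0,0),(t,0)]$ are both closed order-intervals. Writing $[(t,1),(1,1)] = \pi_1^{-1}\bigl((t,1]\bigr) \cup \{(t,1)\}$ as a disjoint union and using $\pi_{1*}\tilde\alpha_n = \alpha_n$, any extension satisfies
\[
\int \chi_t\,d\tilde\alpha_n \;=\; \alpha_n\bigl((t,1]\bigr) + \tilde\alpha_n\bigl(\{(t,1)\}\bigr).
\]
For our $\alpha_n$ the first term equals $-1$ exactly when $a_n \leq t < b_n$ and vanishes otherwise. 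So, if $(\tilde\alpha_n)$ is weak*-null, then for every non-dyadic $t \in (0,1)$ and every $n$ in the (infinite) set $\{n : a_n < t < b_n\}$, $\int \chi_t\,d\tilde\alpha_n = -1 + \tilde\alpha_n(\{(t,1)\}) \to 0$; thus $\tilde\alpha_n(\{(t,1)\}) \to 1$ along that subsequence, and in particular $|\tilde\alpha_n(\{(t,1)\})| > 1/2$ for all but finitely many such $n$.

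The contradiction is then combinatorial. Set $B_n := \{t \in [0,1] : |\tilde\alpha_n(\{(t,1)\})| > 1/2\}$. Because the atoms of $\tilde\alpha_n$ at distinct points $(t,1)$ contribute disjointly to the total variation, and $\Vert\tilde\alpha_n\Vert \leq 2\lambda$, each $B_n$ has fewer than $4\lambda$ elements, hence is finite. On the other hand, each non-dyadic $t \in (0,1)$ lies in the interval $(a_n,b_n)$ for at least one $n$ at every dyadic depth $k$, so $t \in B_n$ for infinitely many $n$, i.e.\ $t \in \limsup_n B_n \subseteq \bigcup_n B_n$; this is a countable union of finite sets and therefore countable, contradicting the uncountability of non-dyadic reals in $(0,1)$. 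The step I expect to require the most insight is identifying the right test family in $C(\DA)$: the whole argument rests on pitting the uncountable family of separated clopen sets $\{[(t,1),(1,1)]\}_{t \in [0,1)}$ that witnesses the non-metrizability of $\DA$ against the atom-counting bound coming from the hypothesized uniform norm control.
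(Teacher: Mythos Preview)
Your proof is correct and follows essentially the same route as the paper: the same weak*-null sequence $\delta_{a_n}-\delta_{b_n}$ of shrinking dyadic differences is tested against the characteristic functions of the clopen half-lines of $\DA$, and the obstruction is an uncountable/countable mismatch. The only difference is packaging. The paper works with the cumulative functions $F_\mu(t)=\mu[(0,0),(t,0)]$ and $G_\nu(t)=\nu[0,t]$, observing that $G_{\nu}(t)=F_\mu(t^+)$ and that bounded variation forces $F_{\mu_n}$ and $G_{\nu_n}$ to agree off a countable set; you instead compute $\int\chi_t\,\dd\tilde\alpha_n$ directly, isolate the discrepancy as the atom $\tilde\alpha_n(\{(t,1)\})$, and bound the number of large atoms via the norm. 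Note that your detour through Lemma~\ref{thm:technical} is unnecessary: any weak*-null lift $(\tilde\alpha_n)$ is automatically norm-bounded by Banach--Steinhaus, so the atom-counting goes through without assuming $\lambda$-\EP.
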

\begin{proof}
Given $\mu\in M(\DA)$, $\nu\in M[0,1]$, we define maps $F_\mu:[0,1]\to\R$ and $G_\nu:[0,1]\to\R$,
by setting:
\[F_\mu(t)=\mu[(0,0),(t,0)],\quad G_\nu(t)=\nu[0,t],\qquad t\in[0,1].\]
The maps $F_\mu$ and $G_\nu$ are differences of increasing functions and therefore have bounded variation; in particular,
they have at most a countable number of discontinuity points. If $\nu=(\pi_1)_*\mu$,
then $G_\nu(t)=F_\mu(t^+)$, for all $t\in\left[0,1\right[$. In particular, $F_\mu$ and $G_\nu$ differ at most
at a countable subset of $[0,1]$. We claim that if a weak*-null sequence $(\nu_n)_{n\ge1}$ in $M[0,1]$ admits
a weak*-null extension $(\mu_n)_{n\ge1}$ in $M(\DA)$ then $(G_{\nu_n})_{n\ge1}$ converges to zero pointwise
outside a countable subset of $[0,1]$. Namely, observe that $[(0,0),(t,0)]$ is clopen and therefore $(F_{\mu_n})$ converges
to zero pointwise. Moreover, since $(\pi_1)_*\mu_n=\nu_n$, the maps $F_{\mu_n}$ and $G_{\nu_n}$ differ at most
at a countable subset of $[0,1]$. This proves the claim.

In order to conclude the proof of the proposition, we simply exhibit a weak*-null sequence $(\nu_n)_{n\ge1}$
in $M[0,1]$ such that $(G_{\nu_n})_{n\ge1}$ does not converge to zero pointwise outside a countable subset of $[0,1]$.
For instance, pick a sequence of intervals $[a_n,b_n]\subset[0,1]$ such that
$\lim_{n\to+\infty}(b_n-a_n)=0$ and set $\nu_n=\delta_{a_n}-\delta_{b_n}$; the uniform continuity of each $f\in C[0,1]$ implies that
the sequence $(\nu_n)_{n\ge1}$ is weak*-null. Note that $G_{\nu_n}$ equals the characteristic function
$\chilow{\left[a_n,b_n\right[}$ and that the intervals can be chosen in a way that $(G_{\nu_n})_{n\ge1}$ does
not converge to zero at any point of $\left[0,1\right[$.
\end{proof}

\begin{rem}\label{thm:remquocWCG}
It is well-known that the map
\[C(\DA)\ni f\longmapsto\big(f(t,1)-f(t,0)\big)_{t\in[0,1]}\in c_0[0,1]\]
induces an isomorphism between $C(\DA)/\pi_1^*C[0,1]$ and $c_0[0,1]$. Thus, the quotient $C(\DA)/\pi_1^*C[0,1]$ is WCG.
In view of Proposition~\ref{thm:propnotc0EP}, we thus have an example of a $C(K)$ space and a separable Banach
subalgebra $\phi^*C(L)$ such that $C(K)/\phi^*C(L)$ is WCG, but $\phi^*C(L)$ does not have the \EP\ in $C(K)$.
\end{rem}

We finish the section by showing (Example~\ref{exa:bigode}) that there exists a continuous increasing surjection $\phi:K\to L$ such that $K$ and $L$ are
countable compact lines and $\phi^*C(L)$ is not complemented in $C(K)$.
To prove that $\phi^*C(L)$ is not complemented in $C(K)$ we will use \cite[Lemma~2.7]{KK}.

We recall some definitions. Let $X$ be a linearly
ordered set and $A$ be a subset of $X$. A point $x\in X$ is a {\em right limit point\/} of $A$ (relatively to $X$) if $x$ is not the maximum
of $X$ and for every $y\in X$ with $y>x$ we have $\left]x,y\right[\cap A\ne\emptyset$. Similarly, one defines left limit points. We denote
by $\der(A,X)$ the set of points $a\in A$ that are both right and left limits of $A$ relatively to $X$. The points of $\der(X,X)$ are called
{\em internal points\/} of $X$ (in accordance with the terminology of \cite{KK}). Recursively, we define $\der_n(A,X)$ by setting
$\der_0(A,X)=A$ and $\der_{n+1}(A,X)=\der\!\big(\!\der_n(A,X),X\big)$. The {\em internal order\/} of $A$ (relatively to $X$)
is defined by $\io(A,X)=n-1$, where $n$ is the least natural number with $\der_n(A,X)=\emptyset$; we set $\io(A,X)=+\infty$ if $\der_n(A,X)\ne\emptyset$,
for all $n$.

In \cite[Lemma~2.7,~(2)]{KK} the authors prove that, given compact lines $K$, $L$ and a continuous increasing surjection $\phi:K\to L$,
the Banach subalgebra $\phi^*C(L)$ of $C(K)$ is complemented if and only if $\io(Q,L)<+\infty$, where $Q$ is the set of points $p\in\der(L,L)$ such that $\phi^{-1}(p)$
has more than one point.

\begin{rem}\label{thm:remKL}
The following observation is useful for constructing compact lines. Given a compact line $K$ and a family of nonempty compact lines $(L_x)_{x\in K}$,
the set $\bigcup_{x\in K}\big(\{x\}\times L_x\big)$ endowed with the lexicographic order is also a compact line.
\end{rem}

\begin{example}\label{exa:bigode}
Let $B=\big\{{\pm\frac1k}:k=1,2,\ldots\big\}\cup\{0\}$ be endowed with the standard order of real numbers. For each positive integer $n$,
denote by $B_n$ the set of those $(x_1,\ldots,x_n)\in B^n$ such that $x_i=0$ implies $x_{i+1}=0$, for all $i=1,\ldots,n-1$.
Endow $B_n$ with the lexicographic order. Using Remark~\ref{thm:remKL}, it follows easily by induction that $B_n$ is a compact line, for all $n$.
Moreover, for $i=1,\ldots,n-1$, we have $\der_i(B_n,B_n)=B_{n-i}\times\{0\}^i$ and thus $\io(B_n,B_n)=n$.
Denote by $L$ the disjoint union $\big(\bigcup_{n=1}^\infty B_n\big)\cup\{\infty\}$, ordered so that the elements of $B_n$ precede the elements of
$B_{n+1}$ and such that $\infty$ is the largest element of $L$. Then $L$ is a countable compact line. Since $B_n$ is a convex subset of
$L$, we have $\io(B_n,L)=\io(B_n,B_n)=n$ and therefore $\io(L,L)=+\infty$, since $\io(B_n,L)\le\io(L,L)$, for all $n$. Let $K=L\times\{0,1\}$ be
endowed with the lexicographic order and $\phi:K\to L$ denote the first projection. Then $K$ is also a countable compact line and $\phi$ is a continuous
increasing surjection. The set $Q$ is equal to $\der(L,L)$ and therefore $\io(Q,L)=+\infty$. It follows from \cite[Lemma~2.7,~(2)]{KK}
that $\phi^*C(L)$ is not complemented in $C(K)$.
\end{example}

\end{section}

\begin{section}{Main result}\label{sec:main}

In this section we prove our main result, which is the following.
\begin{teo}\label{thm:main}
Let $K$ be a compact line, $L$ be a countable compact Hausdorff space and $\phi:K\to L$ be a continuous surjection.
Then, for any $\varepsilon>0$, the subspace $\phi^*C(L)$ has the $(2+\varepsilon)$-\EP\ in $C(K)$.
\end{teo}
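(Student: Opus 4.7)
The plan is transfinite induction on the Cantor--Bendixson rank of $L$, with Theorem~\ref{thm:juntaLi} providing the engine. Since $L$ is countable, compact and Hausdorff, it has no perfect subset and is therefore scattered; let $\alpha$ be the least ordinal with $L^{(\alpha)}=\emptyset$. By compactness $\alpha$ must be a successor, say $\alpha=\beta+1$, and $L^{(\beta)}$ is a finite discrete set $\{p_1,\dots,p_m\}$.

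For the base case $\alpha=1$ the space $L$ is finite; choosing one preimage of each point of $L$ yields a finite (hence metrizable) subset $F\subset K$ with $\phi|_F$ onto, so Lemma~\ref{thm:resultadodoF} delivers the stronger $2$-\EP.

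For the inductive step, exploit that $L$ is zero-dimensional (every countable regular space is) to partition $L$ into pairwise disjoint clopen sets $U_1,\dots,U_m$ with $U_k\cap L^{(\beta)}=\{p_k\}$. Since $\beta\ge1$, each $p_k$ is non-isolated in $U_k$, so $U_k$ is the Alexandroff one-point compactification of the locally compact space $U_k\setminus\{p_k\}$. The latter is countable, locally compact and zero-dimensional, hence a topological sum $\bigsqcup_j V_{k,j}$ of compact clopen pieces, each of Cantor--Bendixson rank strictly below $\alpha$. By the induction hypothesis each $V_{k,j}$ belongs to the class of compact spaces satisfying the thesis of the theorem, and Theorem~\ref{thm:juntaLi} then transfers the property to every $U_k$. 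Finally, $L$ itself is the Alexandroff compactification of $L\setminus\{p_1\}=\bigsqcup_j V_{1,j}\sqcup U_2\sqcup\cdots\sqcup U_m$, which is again a topological sum of compact pieces each already in the class, so one last application of Theorem~\ref{thm:juntaLi} finishes the induction.

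The hard part will be the control of the extension constant throughout the transfinite induction: every invocation of Theorem~\ref{thm:juntaLi} must preserve the factor $2+\varepsilon$, or at least distribute the slack in a way that can be summed over arbitrarily many levels. The very reason the theorem is stated with $(2+\varepsilon)$ rather than the bare $2$ coming out of Sobczyk-type arguments like Lemma~\ref{thm:resultadodoF} is presumably to provide precisely the room needed to absorb those losses uniformly; making this bookkeeping work (together with establishing Theorem~\ref{thm:juntaLi} in the first place) is where the real content of the proof will lie.
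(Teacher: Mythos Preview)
Your approach is essentially the paper's: transfinite induction with Theorem~\ref{thm:juntaLi} doing the real work. The paper parametrizes slightly differently---it inducts on the ordinal $\alpha$ with $L\cong[0,\alpha]$ rather than on the Cantor--Bendixson rank---which makes the successor step trivial (since $[0,\alpha]\cong[0,\alpha+1]$ for infinite $\alpha$) and requires only a \emph{single} application of Theorem~\ref{thm:juntaLi} at each limit stage, instead of the two nested applications in your scheme. But this is cosmetic.

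Your worry about the constant, however, is misplaced, and there is no ``hard bookkeeping'' left to do. The induction hypothesis is ``for all $\varepsilon>0$ the $(2+\varepsilon)$-\EP\ holds'', not ``the $(2+\varepsilon_0)$-\EP\ holds for some fixed $\varepsilon_0$''. So at the inductive step, given a target $\varepsilon>0$, you invoke the hypothesis on the pieces with $\varepsilon/4$, apply Theorem~\ref{thm:juntaLi} once to each $U_k$ to get $(2+\varepsilon/2)$, and apply it once more to $L$ to land at $2+\varepsilon$. There is no accumulation across the transfinite levels: the universal quantifier over $\varepsilon$ is re-established at every stage, so the slack resets. The paper does exactly this in one line---at a limit ordinal it gets $(2+\varepsilon)$ on the pieces by hypothesis, Theorem~\ref{thm:juntaLi} yields $(2+2\varepsilon)$, and since $\varepsilon$ was arbitrary the induction closes. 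The genuine content, as you note parenthetically, lies entirely in proving Theorem~\ref{thm:juntaLi}.
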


Recall that every nonempty countable compact Hausdorff space is homeomorphic to an ordinal segment $[0,\alpha]$,
for some $\alpha<\omega_1$. We will prove Theorem~\ref{thm:main} using induction on $\alpha$.
The hard case is when $\alpha$ is a limit ordinal. The following result takes care of that.
\begin{teo}\label{thm:juntaLi}
Let $K$ be a compact line, $L$ be a compact Hausdorff space and $\phi:K\to L$ be a continuous surjection.
Let $(L_\gamma)_{\gamma\in\Gamma}$ be a disjoint family of clopen subsets of $L$ such that
$L\setminus\big(\bigcup_{\gamma\in\Gamma}L_\gamma\big)$ has at most one point. Set $K_\gamma=\phi^{-1}[L_\gamma]$ and
$\phi_\gamma=\phi\vert_{K_\gamma}:K_\gamma\to L_\gamma$. Let $\lambda\ge2$ and assume that $(\phi_\gamma)^*C(L_\gamma)$ has the
$\lambda$-\EP\ in $C(K_\gamma)$, for all $\gamma\in\Gamma$. Then, for any $\varepsilon>0$, $\phi^*C(L)$ has
the $(\lambda+\varepsilon)$-\EP\ in $C(K)$.
\end{teo}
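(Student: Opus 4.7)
The plan is to apply Corollary~\ref{thm:eraremark}: it suffices to extend an arbitrary normalized weak*-null sequence $(\mu_n)_{n\ge1}$ in $M(L)$ to a weak*-null sequence $(\tilde\mu_n)_{n\ge1}$ in $M(K)$ with $\limsup_n\Vert\tilde\mu_n\Vert\le\lambda$. The case without an exceptional point is essentially trivial: if $L=\bigcup_{\gamma\in\Gamma}L_\gamma$, then compactness of $L$ together with the disjoint clopenness of the $L_\gamma$ forces all but finitely many $L_\gamma$ to be empty, so one simply extends each $\mu_n\vert_{L_\gamma}$ via the $\lambda$-\EP\ of the corresponding piece and sums the finitely many resulting extensions. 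So assume $\{p_0\}=L\setminus\bigcup_\gamma L_\gamma$ exists; set $Q=\phi^{-1}(p_0)$ (closed in $K$) and decompose $\mu_n=\sum_\gamma\mu_n\vert_{L_\gamma}+c_n\delta_{p_0}$, where $c_n=\mu_n(\{p_0\})$ and only countably many summands are nonzero for each fixed $n$ (finiteness of $\mu_n$).

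For each $\gamma$, clopenness of $L_\gamma$ implies that $(\mu_n\vert_{L_\gamma})_n$ is weak*-null in $M(L_\gamma)$ (test against $f\in C(L_\gamma)$ extended by zero to $L$). By the $\lambda$-\EP\ hypothesis and Lemma~\ref{thm:technical}(c), one obtains a weak*-null extension $(\tilde\mu_n^\gamma)_n$ in $M(K_\gamma)$ with $\Vert\tilde\mu_n^\gamma\Vert\le\lambda\Vert\mu_n\vert_{L_\gamma}\Vert$. Viewing each $\tilde\mu_n^\gamma$ as a measure on $K$ supported in $K_\gamma$, set $\eta_n=\sum_\gamma\tilde\mu_n^\gamma$; the series converges absolutely in $M(K)$ with $\Vert\eta_n\Vert\le\lambda(1-|c_n|)$. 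The key technical observation is that $\int g\,\mathrm d\eta_n\to 0$ for every $g\in C(K)$ with $g\vert_Q\equiv0$: given $\varepsilon>0$, the compact set $\{x\in K:|g(x)|\ge\varepsilon\}$ is disjoint from $Q$ and hence covered by finitely many clopen pieces $K_{\gamma_1},\dots,K_{\gamma_m}$; the contribution from those finitely many pieces tends to zero by the individual weak*-null property of each $(\tilde\mu_n^{\gamma_i})_n$, while the contribution from the remaining pieces is bounded by $\varepsilon\lambda$ using $|g|<\varepsilon$ together with $\sum_\gamma\Vert\tilde\mu_n^\gamma\Vert\le\lambda$. Arbitrariness of $\varepsilon$ yields the claim.

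The construction is then completed by setting $\tilde\mu_n=\eta_n+c_n\rho_n$ for a probability measure $\rho_n$ on $Q$, which gives $\phi_*\tilde\mu_n=\mu_n$ and $\Vert\tilde\mu_n\Vert\le\lambda(1-|c_n|)+|c_n|\le\lambda$. The main obstacle — and the step where the hypothesis that $K$ is a compact line becomes genuinely essential — is the careful selection of $\rho_n$ (possibly after modifying the per-piece extensions $\tilde\mu_n^\gamma$ by suitable $\delta$-corrections anchored at points of $Q$) so that $(\tilde\mu_n)$ is weak*-null when tested against a generic $g\in C(K)$ whose restriction to $Q$ is nonconstant. Test functions vanishing on $Q$ are handled by the previous paragraph, and functions of the form $f\circ\phi$ are handled automatically since $\tilde\mu_n$ extends $\mu_n$; closing the gap requires matching the boundary behavior of $\eta_n$ near $Q$ with an appropriate $Q$-supported correction. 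The compact-line structure enters via the fact that each clopen $K_\gamma$ is a finite disjoint union of clopen convex subsets of $K$ whose endpoints have well-defined immediate neighbors in $K$ (possibly lying in $Q$), providing a canonical way to pair pieces near $Q$ with specific points of $Q$; the hypothesis $\lambda\ge 2$ is expected to provide the slack to absorb the cost of the resulting adjustments.
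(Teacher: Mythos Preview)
Your proposal is not a complete proof: you explicitly acknowledge the gap (``closing the gap requires\ldots'') and offer only a heuristic for how to proceed. The difficulty you isolate is real, and it is precisely the content of the paper's proof, so let me say concretely what is missing and how the paper handles it.

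The naive sum $\eta_n=\sum_\gamma\tilde\mu_n^\gamma$ plus a single term $c_n\rho_n$ supported on $Q$ will \emph{not} be weak*-null in general, no matter how $\rho_n$ is chosen: the obstruction is that for $g\in C(K)$ nonconstant on $Q$, the integrals $\int g\,\dd\eta_n$ depend on how the mass of each $\tilde\mu_n^\gamma$ is distributed \emph{within} $K_\gamma$ relative to the values of $g$ on the nearby portions of $Q$, and a single probability $\rho_n$ on $Q$ cannot cancel this. The paper resolves this by attaching to \emph{each} clopen interval $I_{\gamma,j}$ (in the decomposition $K_\gamma=\bigcup_j I_{\gamma,j}$) a correction on $Q$ whose size is the total mass $\tilde\mu_n^\gamma(I_{\gamma,j})$; these corrections are produced in a uniform way via a regular extension operator $E_Q:C(Q)\to C(K)$ (this is where the compact-line hypothesis enters) and the operator $P$ of Lemma~\ref{thm:lemaP}. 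Weak*-convergence is then proved not for arbitrary $g$ but for \emph{increasing} $g$ (linearly dense in $C(K)$, again a compact-line fact): the point is that for increasing $g$ the oscillation $\sup_{I_{\gamma,j}}g-\inf_{I_{\gamma,j}}g$ sums to at most $g(\max K)-g(\min K)$ over all intervals, and Lemma~\ref{thm:inequalityintegral} converts this into the tail estimate needed.

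There is a second missing idea. The correction just described costs an extra $\sum_{\gamma,j}\vert\tilde\mu_n^\gamma(I_{\gamma,j})\vert$ in total variation (Lemma~\ref{thm:colamedida}(c)), and if one uses the $\lambda$-\EP\ extensions $\hat\mu_n^\gamma$ for \emph{all} $\gamma$ simultaneously, this extra cost need not vanish as $n\to\infty$. The paper's remedy (Lemma~\ref{thm:lemadosr}) is a diagonal argument: one chooses an increasing sequence of finite sets $\Phi_n\subset\Gamma$ and sets $\tilde\mu_n^\gamma=\hat\mu_n^\gamma$ only for $\gamma\in\Phi_n$, using a norm-$1$ Hahn--Banach extension for $\gamma\notin\Phi_n$. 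The hypothesis $\lambda\ge2$ then absorbs the Hahn--Banach part (which contributes at most $2\Vert\mu_n^\gamma\Vert$ after the interval-correction), while the $\Phi_n$-part of the extra cost tends to zero by the choice of $\Phi_n$. Your suggestion that ``$\lambda\ge2$ is expected to provide the slack'' is correct in spirit, but the mechanism is this switching, not a direct absorption.
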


Let us start by proving Theorem~\ref{thm:main} using Theorem~\ref{thm:juntaLi}. The proof of Theorem~\ref{thm:juntaLi}
requires several technical lemmas and will be presented in Subsection~\ref{sub:hell} below.
\begin{proof}[Proof of Theorem~\ref{thm:main}]
For every countable ordinal $\alpha$, denote by $\mathbb P(\alpha)$ the condition which asserts that
the thesis of the theorem holds for $L=[0,\alpha]$, any compact line $K$ and any continuous surjection $\phi:K\to L$.
If $\alpha$ is finite then $\phi^*C[0,\alpha]$ has the $1$-\EP\ in $C(K)$ because a right inverse $s$ of $\phi$ is continuous
and $\phi^*\circ s^*:C(K)\to\phi^*C[0,\alpha]$ is a norm-$1$ projection. If $\alpha$ is infinite then $[0,\alpha]$
and $[0,\alpha+1]$ are homeomorphic, so $\mathbb P(\alpha)$ is equivalent to $\mathbb P(\alpha+1)$. To conclude the proof,
let $\alpha<\omega_1$ be a limit ordinal and set $L=[0,\alpha]$.
Let $(\alpha_n)_{n\ge0}$ be a strictly increasing sequence of ordinals with $\sup_{n\ge0}\alpha_n=\alpha$. Set
$L_0=[0,\alpha_0]$ and $L_n=\left]\alpha_{n-1},\alpha_n\right]$, for $n\ge1$. Each $L_n$ is homeomorphic to $[0,\beta_n]$,
for some $\beta_n<\alpha$; therefore, given $\varepsilon>0$, the subspace $(\phi_n)^*C(L_n)$ has
the $(2+\varepsilon)$-\EP\ in $C(K_n)$, where $K_n=\phi^{-1}[L_n]$ and $\phi_n=\phi\vert_{K_n}:K_n\to L_n$.
Hence, Theorem~\ref{thm:juntaLi} yields that $\phi^*C(L)$ has the $(2+2\varepsilon)$-\EP\ in $C(K)$.
\end{proof}

Theorem~\ref{thm:main} has some interesting corollaries concerning the complementation of isomorphic copies of $c_0$
in $C(K)$ spaces.
\begin{cor}
Let $K$ be a compact line and $(f_n)_{n\ge1}$ be a sequence in $C(K)$ equivalent to the canonical basis
of $c_0$. If the map:
\begin{equation}\label{eq:countablerange}
\phi:K\ni p\longmapsto\big(f_n(p)\big)_{n\ge1}\in\R^\omega
\end{equation}
has countable range
then the isomorphic copy of $c_0$ spanned by $(f_n)_{n\ge1}$ is complemented in $C(K)$.
\end{cor}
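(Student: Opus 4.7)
The plan is to deduce the corollary from Theorem~\ref{thm:main} by identifying an appropriate countable compact Hausdorff space $L$. Since each $f_n$ is continuous, the map $\phi$ in \eqref{eq:countablerange} is continuous from $K$ into $\R^\omega$, so its image $L:=\phi(K)$ is compact; by hypothesis it is also countable. Viewing $\phi$ as a continuous surjection $\phi:K\to L$, each generator satisfies $f_n=\phi^*(g_n)$, where $g_n\in C(L)$ is the restriction to $L$ of the $n$-th coordinate projection on $\R^\omega$. Hence the closed linear span $E:=\overline{\mathrm{span}}\{f_n:n\ge1\}$ is a closed subspace of $\phi^*C(L)$.

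The next step is to extend the canonical isomorphism $T:E\to c_0$ (sending $f_n$ to the $n$-th unit vector, which exists by hypothesis) to a bounded $c_0$-valued operator on all of $C(K)$. Since $L$ is countable, $C(L)$, and therefore $\phi^*C(L)$, is separable, so Sobczyk's theorem provides a bounded extension $T_1:\phi^*C(L)\to c_0$ of $T$. Applying Theorem~\ref{thm:main} to the pair $(K,L)$ then produces a further bounded extension $T_2:C(K)\to c_0$ of $T_1$. The composite $P:=T^{-1}\circ T_2:C(K)\to E$ is bounded, and restricts to the identity on $E$ because $T_2|_E=T$; it is therefore the desired bounded projection of $C(K)$ onto $E$.

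The real content of the corollary is carried entirely by Theorem~\ref{thm:main}; once that is in hand, the routine verifications---continuity of $\phi$, countability and compactness of $L$, the factorization $f_n=\phi^*(g_n)$, and the chaining of the two extensions---present no serious obstacle.
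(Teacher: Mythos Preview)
Your proof is correct and follows essentially the same route as the paper: identify $L=\phi(K)\subset\R^\omega$ as a countable compact Hausdorff space so that $E\subset\phi^*C(L)$, then apply Theorem~\ref{thm:main}. The paper's one-line proof suppresses your two-step extension (Sobczyk on the separable $\phi^*C(L)$, then Theorem~\ref{thm:main}) by invoking the remark from the introduction that a separable subspace with the \EP\ in $X$ has all its copies of $c_0$ complemented in $X$---which unpacks to exactly what you wrote.
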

\begin{proof}
Note that the Banach subalgebra spanned by $(f_n)_{n\ge1}$ is $\phi^*C(L)$, where $L$ denotes the range of
$\phi$ (endowed with the topology induced by the product topology of $\R^\omega$).
\end{proof}

\begin{cor}
Let $K$ be a compact line and $(f_n)_{n\ge1}$ be a sequence in $C(K)$ equivalent to the canonical basis
of $c_0$. Assume that each $f_n$ has countable range and that there exists $\varepsilon>0$ such that,
for all $n\ge1$ and all $p\in K$, $\vert f_n(p)\vert\ge\varepsilon$ if $f_n(p)\ne0$. Then the isomorphic copy
of $c_0$ spanned by $(f_n)_{n\ge1}$ is complemented in $C(K)$.
\end{cor}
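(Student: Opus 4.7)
The plan is to reduce to the previous corollary by showing that the map
\[
\phi\colon K\ni p\longmapsto\big(f_n(p)\big)_{n\ge1}\in\R^\omega
\]
has countable range; once this is established, the previous corollary immediately gives complementation.

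The first step exploits the fact that the canonical basis of $c_0$ is weakly null. Since $(f_n)_{n\ge1}$ is equivalent to that basis, it is weakly null in its closed linear span, and therefore weakly null in $C(K)$. Pairing with the evaluation functional $\delta_p\in C(K)^*$ yields $f_n(p)\to 0$ for every $p\in K$. Combined with the standing hypothesis $|f_n(p)|\ge\varepsilon$ whenever $f_n(p)\ne 0$, this forces the sequence $\big(f_n(p)\big)_{n\ge1}$ to be eventually zero; that is, $\phi(p)$ is a finitely supported sequence, for every $p\in K$.

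The second step is a routine counting argument. Since each $f_n$ has countable range, the set of finitely supported sequences whose $n$-th entry lies in $f_n(K)$ for every $n$ is a countable union, indexed by the finite subsets $F\subset\mathbb{N}$, of finite products of countable sets; such a union is countable. As $\phi(K)$ is contained in this set, it is countable, and the previous corollary closes the proof.

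I do not expect any genuine obstacle here: the substantive work has already been done in Theorem~\ref{thm:main} and in the previous corollary, and the present statement is really just the observation that the two hypotheses (countable range of each $f_n$, plus a uniform lower bound on nonzero values) combine with the weak nullness inherited from equivalence with the $c_0$-basis to force $\phi$ to have countable range.
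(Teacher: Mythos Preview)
Your proof is correct and follows essentially the same route as the paper: show that $\big(f_n(p)\big)_{n\ge1}$ is eventually zero for every $p$, deduce that $\phi$ has countable range, and invoke the preceding corollary. The only cosmetic difference is that the paper obtains eventual nullness from the inequality $\sum_{n\ge1}|f_n(p)|=\Vert T^*\delta_p\Vert\le\Vert T\Vert$ (where $T$ sends the $c_0$-basis to $(f_n)$), whereas you use weak nullness of $(f_n)$; both arguments are standard and yield the same conclusion once the uniform lower bound $\varepsilon$ is applied.
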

\begin{proof}
Let $T\in\mathcal B\big(c_0,C(K)\big)$ be the map that carries the canonical basis of $c_0$ to $(f_n)_{n\ge1}$.
Note that $\sum_{n=1}^\infty\vert f_n(p)\vert=\Vert T^*\delta_p\Vert\le\Vert T\Vert$, for all $p\in K$.
Thus, our assumptions imply that, for each $p\in K$, the sequence $\big(f_n(p)\big)_{n\ge1}$ is eventually
null; since each $f_n$ has countable range, it follows that the map \eqref{eq:countablerange} has countable range as well.
\end{proof}

\vspace{5pt}

The next proposition implies that the constant $2$ in the thesis of Theorem~\ref{thm:main} is optimal.
\begin{prop}
Let $K$ be a Boolean space having more than one limit point. Then there exists a continuous surjection
$\phi:K\to[0,\omega]$ such that $\phi^*C[0,\omega]$ does not have the $\lambda$-\EP\ in $C(K)$ for $\lambda<2$.
\end{prop}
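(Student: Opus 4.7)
The plan is to adapt the classical proof that the Sobczyk constant $2$ is sharp (no projection $c\to c_0$ has norm $<2$) to the $C(K)$/measure setting, exploiting the two limit points of $K$ to force norm-doubling in any weak*-null lift to $M(K)$.

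First I would fix distinct limit points $p,q\in K$ and use Boolean-ness to separate them by a clopen $U_1$ with $p\in U_1$, $q\notin U_1$. Because every limit point of $K$ is also a limit point of each of its clopen neighborhoods, I can recursively shrink $U_1$ into a strictly decreasing chain $U_1\supsetneq U_2\supsetneq\cdots$ of clopen neighborhoods of $p$ with each $V_n:=U_n\setminus U_{n+1}$ nonempty; the analogous construction inside the clopen $K\setminus U_1$ yields $U_1'\supsetneq U_2'\supsetneq\cdots$ around $q$, with $V_n':=U_n'\setminus U_{n+1}'$ nonempty. I would then define the \emph{interleaved} map $\phi:K\to[0,\omega]$ by $\phi(V_n)=\{2n\}$, $\phi(V_n')=\{2n+1\}$, and $\phi\equiv\omega$ on the remainder $R=W_p\cup W_q$, with $W_p=\bigcap_n U_n\ni p$ and $W_q=\bigcap_n U_n'\ni q$. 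Continuity and surjectivity of $\phi$ are routine (every $V_n,V_n'$ is clopen and $R$ is closed).

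The witnessing normalized weak*-null sequence is $\nu_m=\tfrac12(\delta_{2m}-\delta_{2m+1})\in M[0,\omega]$. For any weak*-null extension $\mu_m\in M(K)$, the condition $\phi_*\mu_m=\nu_m$ pins down $\mu_m(V_m)=\tfrac12$, $\mu_m(V_m')=-\tfrac12$, $\mu_m(V_n)=\mu_m(V_n')=0$ for $n\neq m$, and $\mu_m(R)=0$. The key step is to apply weak*-nullity to the continuous function $\chi_{U_1}$: using the disjoint decomposition $U_1=W_p\sqcup\bigsqcup_n V_n$ one gets $\mu_m(U_1)=\tfrac12+\mu_m(W_p)\to 0$, whence $\mu_m(W_p)\to-\tfrac12$, and symmetrically $\mu_m(W_q)\to\tfrac12$. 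Since $V_m$, $V_m'$, $W_p$, $W_q$ are pairwise disjoint, the triangle inequality for total variation gives
\[\|\mu_m\|\ge|\mu_m(V_m)|+|\mu_m(V_m')|+|\mu_m(W_p)|+|\mu_m(W_q)|,\]
whose right-hand side tends to $2$. Hence $\sup_m\|\mu_m\|\ge 2$ for every such extension, and Lemma~\ref{thm:technical} forbids $\lambda$-\EP\ for any $\lambda<2$.

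The delicate point is the \emph{interleaving}: it is precisely the fact that $V_n$ is routed to the even coordinate $2n$ and $V_n'$ to the odd coordinate $2n+1$ that turns $\nu_m$ into a ``difference across the two limit points'', forcing any weak*-null lift to carry compensating mass of size $\tfrac12$ near \emph{both} $p$ and $q$. Once $\phi$ is chosen this way, everything else is a direct computation with clopen indicators, and the two extra $\tfrac12$-pieces of total variation localized in $W_p$ and $W_q$ are exactly what yields the factor $2$.
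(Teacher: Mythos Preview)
Your proof is correct and follows essentially the same strategy as the paper's: split $K$ into two infinite clopen halves, interleave them via the parity of the target coordinate, test against the normalized weak*-null sequence $\tfrac12(\delta_{\text{even}}-\delta_{\text{odd}})$, and use weak*-nullity on the clopen half $U_1$ to force residual mass of size $\tfrac12$ on each side. The only cosmetic differences are that the paper takes arbitrary disjoint clopens $B^k_i$ inside each half (rather than your nested chains $U_n$), uses the full shift sequence $v_n=\tfrac12(\delta_n-\delta_{n+1})$ (of which your $\nu_m$ is the even-indexed subsequence), and bounds $|\mu_n|(B_i)$ via the triangle-inequality trick $|\mu_n(A_i)|+|\mu_n(B_i\setminus A_i)|\ge 2|\mu_n(A_i)|-|\mu_n(B_i)|$ rather than computing $\mu_m(W_p),\mu_m(W_q)$ explicitly.
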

\begin{proof}
The space $K$ can be written as a disjoint union $K=B_0\cup B_1$, with $B_0$, $B_1$ infinite clopen subsets of $K$.
For $i=0,1$, let $(B^k_i)_{k\ge0}$ be a sequence of nonempty disjoint clopen subsets of $B_i$. Define the map
$\phi$ by setting $\phi(p)=2k+i$ for $p\in B^k_i$ and $\phi(p)=\omega$ otherwise.
Consider the normalized weak*-null sequence $(v_n)_{n\ge0}$ in $\ell_1[0,\omega]\equiv C[0,\omega]^*$ defined as follows:
set $v_n(n)=\frac12$, $v_n(n+1)=-\frac12$ and $v_n(j)=0$ for $j\in[0,\omega]\setminus\{n,n+1\}$. We claim
that if $(\mu_n)_{n\ge0}$ is a weak*-null sequence in $M(K)$ such that $\phi_*\mu_n=v_n$, for all $n\ge0$, then
$\sup_{n\ge0}\Vert\mu_n\Vert\ge2$. Note that the equality $\phi_*\mu_n=v_n$ means that
$\mu_n(B^k_i)=v_n(2k+i)$ and that $\mu_n(K)=0$. Set $A_i=\bigcup_{k=0}^\infty B^k_i$,
so that $\vert\mu_n(A_i)\vert=\frac12$. Now:
\begin{align*}
\vert\mu_n\vert(B_i)&\ge\vert\mu_n(A_i)\vert+\vert\mu_n(B_i\setminus A_i)\vert=
\vert\mu_n(A_i)\vert+\vert\mu_n(B_i)-\mu_n(A_i)\vert\\
&\ge2\vert\mu_n(A_i)\vert-\vert\mu_n(B_i)\vert=1-\vert\mu_n(B_i)\vert,
\end{align*}
and:
\[\Vert\mu_n\Vert=\vert\mu_n\vert(B_0)+\vert\mu_n\vert(B_1)\ge2-\vert\mu_n(B_0)\vert-\vert\mu_n(B_1)\vert.\]
Since $B_i$ is clopen, $\lim_{n\to+\infty}\mu_n(B_i)=0$ and the claim is proved.
\end{proof}

\begin{rem}\label{thm:herparacomp}
In Theorem~\ref{thm:main}, the assumption that $L$ be countable can be weakened: namely, it is sufficient
to assume that $L$ be hereditarily paracompact and scattered. To see this, consider the class of all compact Hausdorff
spaces $L$ such that the thesis of Theorem~\ref{thm:main} holds for any compact line $K$ and any continuous surjection $\phi:K\to L$.
Theorem~\ref{thm:juntaLi} implies that such class is closed under the operation of taking the Alexandroff
compactification of topological sums. Moreover, it is known \cite[Theorem~3,~(3)]{Taras} that the smallest class of spaces
closed under such operation and containing the singleton is the class of compact Hausdorff
hereditarily paracompact scattered spaces.
\end{rem}

\subsection{Some technical lemmas and the proof of Theorem~\ref{thm:juntaLi}}\label{sub:hell}
Recall that if $F$ is a closed subset of $K$ then an {\em extension operator\/} for $F$
is a bounded operator $E_F:C(F)\to C(K)$ which is a right inverse for the restriction operator
$\rho_F:C(K)\to C(F)$. We call $E_F$ a {\em regular\/} extension operator if, in addition,
$\Vert E_F\Vert\le1$ and $E_F(\mathbf1_F)=\mathbf1_K$. If $K$ is a compact line then
every nonempty closed subset of $K$ admits a regular extension operator (\cite[Lemma~4.2]{Kubis}).
\begin{lem}\label{thm:lemaP}
Let $K$ be a compact Hausdorff space, $F$ be a closed subset of $K$ and $E_F:C(F)\to C(K)$ be
a regular extension operator. Let $(p_\theta)_{\theta\in\Theta}$ be a family of distinct points of $K\setminus F$ such
that every limit point of $\{p_\theta:\theta\in\Theta\}$ is in $F$. Then the equality:
\begin{equation}\label{eq:defP}
P(f)(\theta)=\big(f-E_F(f\vert_F)\big)(p_\theta),\quad f\in C(K),\ \theta\in\Theta,
\end{equation}
defines a bounded linear map $P:C(K)\to c_0(\Theta)$. Moreover, for every $v\in c_0(\Theta)^*\equiv\ell_1(\Theta)$,
the measure $P^*(v)\in M(K)$ satisfies:
\begin{itemize}
\item[(i)] $P^*(v)(K)=0$;
\item[(ii)] $P^*(v)$ and $\sum_{\theta\in\Theta}v(\theta)\delta_{p_\theta}$ define the same Borel measure
on $K\setminus F$;
\item[(iii)] the total variation $\vert P^*(v)\vert$ satisfies $\vert P^*(v)\vert(F)\le\Vert v\Vert$.
\end{itemize}
\end{lem}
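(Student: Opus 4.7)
The plan is to verify that $P$ is well-defined (lands in $c_0(\Theta)$) and bounded, then compute its adjoint on the canonical basis of $\ell_1(\Theta)$, and finally read off (i)--(iii) from the resulting explicit formula for $P^*(v)$.

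For well-definedness, set $g=f-E_F(f\vert_F)\in C(K)$ and note that $g$ vanishes on $F$ because $E_F$ is an extension operator. I claim that $\{\theta\in\Theta:\vert g(p_\theta)\vert\ge\varepsilon\}$ is finite for every $\varepsilon>0$. Otherwise, one could choose infinitely many distinct points $p_{\theta_n}$ with $\vert g(p_{\theta_n})\vert\ge\varepsilon$; the infinite set $\{p_{\theta_n}:n\ge1\}$ would have an accumulation point $p\in K$ by compactness, the hypothesis on limit points would force $p\in F$ and hence $g(p)=0$, and continuity of $g$ would then produce some $p_{\theta_n}$ near $p$ violating the lower bound. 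Boundedness with $\Vert P\Vert\le1+\Vert E_F\Vert\le2$ is immediate.

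To compute $P^*$, observe that for the canonical basis vector $e_\theta\in\ell_1(\Theta)$ and $f\in C(K)$ we have $P^*(e_\theta)(f)=P(f)(\theta)=f(p_\theta)-E_F(f\vert_F)(p_\theta)$. The linear functional $h\mapsto E_F(h)(p_\theta)$ on $C(F)$ has norm at most $1$ (because $\Vert E_F\Vert\le1$) and maps $\mathbf1_F$ to $1$ (because $E_F(\mathbf1_F)=\mathbf1_K$), so by the Riesz representation theorem it is integration against a probability measure $\nu_\theta\in M(F)$. Regarding $\nu_\theta$ as a measure on $K$ supported in $F$, we obtain $P^*(e_\theta)=\delta_{p_\theta}-\nu_\theta$.

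For a general $v\in\ell_1(\Theta)$, the decomposition $v=\sum_\theta v(\theta)e_\theta$ converges absolutely in $\ell_1(\Theta)$, so by continuity of $P^*$ one obtains $P^*(v)=\sum_\theta v(\theta)\delta_{p_\theta}-\sum_\theta v(\theta)\nu_\theta$, with both series absolutely convergent in $M(K)$. The first summand is supported in $\{p_\theta:\theta\in\Theta\}\subseteq K\setminus F$ and the second in $F$, so the two pieces are mutually singular. Item (i) follows at once from $(\delta_{p_\theta}-\nu_\theta)(K)=0$; item (ii) is immediate because the $F$-part vanishes off $F$; and (iii) follows from $\vert P^*(v)\vert(F)=\bigl\vert\sum_\theta v(\theta)\nu_\theta\bigr\vert(F)\le\sum_\theta\vert v(\theta)\vert\,\Vert\nu_\theta\Vert=\Vert v\Vert$. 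The only delicate step is verifying that $P(f)\in c_0(\Theta)$; once that is established, (i)--(iii) reduce to bookkeeping of the decomposition of $P^*(v)$ into its parts on $K\setminus F$ and on $F$.
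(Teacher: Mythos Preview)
Your proof is correct and follows essentially the same route as the paper's: both decompose $P$ (equivalently $P^*$) into a ``point-evaluation'' piece supported on $\{p_\theta\}\subset K\setminus F$ and an ``$E_F\circ\rho_F$'' piece supported on $F$, and then (i)--(iii) fall out of this splitting. The only cosmetic difference is that the paper writes $P=P_1-P_2$ with $P_2=P_1\circ E_F\circ\rho_F$ and bounds $\vert P^*(v)\vert(F)=\vert P_2^*(v)\vert(F)\le\Vert v\Vert$ directly from $\Vert P_2\Vert\le1$, whereas you pass through the basis $(e_\theta)$ and invoke Riesz to identify each $h\mapsto E_F(h)(p_\theta)$ with a probability $\nu_\theta$; these are equivalent computations.
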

\begin{proof}
Given $f\in C(K)$, since every limit point of $\{p_\theta:\theta\in\Theta\}$ is in the zero set
of $f-E_F(f\vert_F)$, it follows that $P(f)$ indeed belongs to $c_0(\Theta)$. From $P(\mathbf1_K)=0$
we obtain (i). It remains to prove (ii) and (iii).
Define a bounded operator
$P_1:C(K)\to\ell_\infty(\Theta)$ by setting $P_1(f)(\theta)=f(p_\theta)$.
Then $P$, regarded with counterdomain in $\ell_\infty(\Theta)$, can be written as $P=P_1-P_2$, where
$P_2=P_1\circ E_F\circ\rho_F$. Using the standard bilinear pairing between $\ell_1(\Theta)$ and $\ell_\infty(\Theta)$,
we can regard $v\in\ell_1(\Theta)$ as linear functional on $\ell_\infty(\Theta)$. We can therefore write
$P^*(v)=P_1^*(v)-P_2^*(v)$. Clearly, $P_1^*(v)=\sum_{\theta\in\Theta}v(\theta)\delta_{p_\theta}$.
To prove (ii), we now check that $P_2^*(v)$ vanishes identically on $K\setminus F$. Namely, if $i:F\to K$ denotes the inclusion
map then $\rho_F=i^*$ and thus the adjoint of $\rho_F$ is the push-forward
$i_*:M(F)\to M(K)$ that extends measures to zero. Finally, to prove (iii), note that $P_1^*(v)$ vanishes identically on $F$, and hence:
\[\vert P^*(v)\vert(F)=\vert P_2^*(v)\vert(F)\le\Vert P_2^*(v)\Vert\le\Vert v\Vert.\qedhere\]
\end{proof}

\begin{lem}\label{thm:inequalityintegral}
Let $(\mathfrak X,\mathcal A)$ be a measurable space and $\mu:\mathcal A\to\R$ be a signed measure
with $\mu(\mathfrak X)=0$. If $f:\mathfrak X\to\R$ is a bounded measurable function then:
\begin{equation}\label{eq:inequalityintegral}
\Big\vert\int_{\mathfrak X}f\,\dd\mu\Big\vert\le\frac12\,(\sup f-\inf f)\Vert\mu\Vert.
\end{equation}
\end{lem}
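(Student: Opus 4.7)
The plan is to exploit the hypothesis $\mu(\mathfrak X)=0$ by shifting $f$ by a constant, which does not change the value of the integral but can make the sup-norm of the integrand as small as possible. The natural choice is to center $f$ around the midpoint of its range.

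More precisely, I would set $c=\frac12(\sup f+\inf f)$ and write $g=f-c$. Because $\mu(\mathfrak X)=0$, we have
\[
\int_{\mathfrak X}f\,\dd\mu=\int_{\mathfrak X}g\,\dd\mu+c\,\mu(\mathfrak X)=\int_{\mathfrak X}g\,\dd\mu.
\]
On the other hand, by the choice of $c$, for every $x\in\mathfrak X$
\[
|g(x)|=\bigl|f(x)-c\bigr|\le\tfrac12(\sup f-\inf f),
\]
so $\|g\|_\infty\le\frac12(\sup f-\inf f)$.

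Then the standard bound $\bigl|\int g\,\dd\mu\bigr|\le\|g\|_\infty\,\|\mu\|$ (where $\|\mu\|=|\mu|(\mathfrak X)$ is the total variation) yields \eqref{eq:inequalityintegral}. There is no real obstacle here; the only content of the lemma is the observation that, among all constants by which we may translate $f$ without affecting the integral, the midpoint of the range is the one that minimizes the sup-norm, giving the factor $\frac12$ rather than $\sup|f|$.
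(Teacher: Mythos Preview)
Your proof is correct and is essentially identical to the paper's: the paper also observes that adding a constant to $f$ changes neither side of the inequality (using $\mu(\mathfrak X)=0$), reduces to the case $\sup f=-\inf f$, and then bounds $\sup_x|f(x)|$ by $\tfrac12(\sup f-\inf f)$. The only cosmetic difference is that you name the shift $c=\tfrac12(\sup f+\inf f)$ explicitly, while the paper phrases it as a without-loss-of-generality reduction.
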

\begin{proof}
Since $\mu(\mathfrak X)=0$, neither side of inequality \eqref{eq:inequalityintegral} changes if we add a constant
to $f$. Thus, we can assume $\sup f=-\inf f$, in which case $\sup_{x\in\mathfrak X}\vert f(x)\vert\le\frac12(\sup f-\inf f)$.
\end{proof}

\begin{rem}
Note that if $\mu\in M(K)$ and $(U_\theta)_{\theta\in\Theta}$ is a (possibly uncountable) family of disjoint open
subsets of $K$ then:
\[\mu(B)=\sum_{\theta\in\Theta}\mu(B\cap U_\theta),\]
for any Borel subset $B$ of $U=\bigcup_{\theta\in\Theta}U_\theta$; moreover, if $f\in L^1(K,\mu)$ then:
\[\int_Uf\,\dd\mu=\sum_{\theta\in\Theta}\int_{U_\theta}f\,\dd\mu.\]
To prove these equalities, note that $\vert\mu\vert(U_\theta)=0$ for $\theta\in\Theta$ outside some countable subset
$\Theta_0$ of $\Theta$ and that, by regularity, $\mu$ vanishes identically on $\bigcup_{\theta\in\Theta\setminus\Theta_0}U_\theta$.
\end{rem}

\begin{lem}\label{thm:colamedida}
Let $K$ be a compact line and $(I_\theta)_{\theta\in\Theta}$ be a disjoint family of clopen intervals of $K$
such that $\bigcup_{\theta\in\Theta}I_\theta$ is a proper subset of $K$.
For each $\theta\in\Theta$, let $(\nu^n_\theta)_{n\ge1}$ be a weak*-null sequence in $M(I_\theta)$.
Assume that:
\begin{equation}\label{eq:bounded}
\sup_{n\ge1}\sum_{\theta\in\Theta}\Vert\nu^n_\theta\Vert<+\infty.
\end{equation}
Then, there exists a weak*-null sequence $(\nu^n)_{n\ge1}$ in $M(K)$ such that, for all $n\ge1$:
\begin{itemize}
\item[(a)] $\nu^n\vert_{I_\theta}=\nu^n_\theta$, for all $\theta\in\Theta$;
\item[(b)] $\nu^n(K)=0$;
\item[(c)] $\Vert\nu^n\Vert\le\sum_{\theta\in\Theta}\big(\Vert\nu^n_\theta\Vert+\vert\nu^n_\theta(I_\theta)\vert\big)$.
\end{itemize}
\end{lem}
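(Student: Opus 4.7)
The plan is to construct $\nu^n$ as the natural gluing of the $\nu^n_\theta$'s, plus a correction supported on $F := K\setminus\bigcup_{\theta'\in\Theta}I_{\theta'}$ so that $\nu^n(K)=0$. For each $\theta$, we choose a point $q_\theta \in F$ (which is nonempty by hypothesis) and set
\[
\nu^n \;:=\; \sum_{\theta\in\Theta}\nu^n_\theta \;-\; \sum_{\theta\in\Theta}\nu^n_\theta(I_\theta)\,\delta_{q_\theta}.
\]
The remark preceding the lemma ensures that, for each $n$, only countably many $\theta$ contribute nontrivially, and the bound \eqref{eq:bounded} yields absolute convergence of both sums in $M(K)$. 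Properties (a), (b), (c) then follow directly: (a) because $q_\theta \in F$ lies outside every $I_{\theta'}$; (b) by the cancellation of the two correction sums; (c) by the triangle inequality.

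The main issue is verifying that $(\nu^n)_{n\ge 1}$ is weak*-null. A direct computation using (b) gives, for every $f\in C(K)$,
\[
\int_K f\,\dd\nu^n \;=\; \sum_{\theta\in\Theta}\int_{I_\theta}\bigl(f-f(q_\theta)\bigr)\,\dd\nu^n_\theta,
\]
and each summand tends to $0$ as $n\to+\infty$ by the weak*-nullness of $\nu^n_\theta$ in $M(I_\theta)$. To interchange limit and sum, it suffices to ensure that, for every $f\in C(K)$ and every $\varepsilon>0$, the ``bad set''
\[
\Theta_{f,\varepsilon}\;:=\;\bigl\{\theta\in\Theta\,:\,\sup\nolimits_{x\in I_\theta}\vert f(x)-f(q_\theta)\vert\ge\varepsilon\bigr\}
\]
is finite. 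A standard split-sum argument---the finite part vanishing in $n$ term-by-term, and the tail bounded by $\varepsilon\sup_n\sum_\theta\|\nu^n_\theta\|$, which is finite by \eqref{eq:bounded}---then yields $\int f\,\dd\nu^n\to 0$.

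The crux is thus to choose the $q_\theta$'s so that this finiteness holds. The idea is to exploit the order structure of $K$ and pick each $q_\theta$ to be a point of $F$ at which the family $(I_{\theta'})_{\theta'\in\Theta}$ accumulates from the appropriate side of $I_\theta$: concretely, $q_\theta$ should be an order-nearest point of $F$ to $I_\theta$---either the nearest $F$-point above $\sup I_\theta$ or the nearest below $\inf I_\theta$---chosen so that $I_\theta$ lies in arbitrarily small neighborhoods of $q_\theta$ for ``most'' $\theta$. The main obstacle is then verifying the finiteness of $\Theta_{f,\varepsilon}$. The argument is by contradiction and compactness: an infinite bad subfamily $(\theta_n)$ produces, via compactness of $K$, witnesses $x_n\in I_{\theta_n}$ and the corresponding $q_{\theta_n}$ converging to limits $x^*,q^*\in K$ with $|f(x^*)-f(q^*)|\ge\varepsilon$; the disjointness and clopenness of the $I_\theta$'s force $x^*,q^*\in F$; and the special choice of the $q_\theta$'s must then be used to either collapse $x^*$ and $q^*$ to a single point (contradicting $\varepsilon$ by continuity of $f$) or else manufacture a gap in $F$ incompatible with the defining property of the $q_{\theta_n}$'s.
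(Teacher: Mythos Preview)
Your construction of $\nu^n$ via Dirac corrections at points $q_\theta\in F$ is a genuinely different route from the paper's, and (a), (b), (c) do follow as you say. The difficulty you correctly isolate---making $\Theta_{f,\varepsilon}$ finite---is real, and your treatment of it is where the argument breaks down.

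The gap is that you never actually \emph{define} $q_\theta$. You describe it as ``an order-nearest point of $F$ to $I_\theta$---either the nearest $F$-point above $\sup I_\theta$ or the nearest below $\inf I_\theta$---chosen so that $I_\theta$ lies in arbitrarily small neighborhoods of $q_\theta$ for `most' $\theta$.'' But the last clause is precisely the property you are trying to prove, not a recipe for the choice. And the choice matters: if $J_\sigma=(q_\sigma^-,q_\sigma^+)$ is a convex component of $K\setminus F$ containing infinitely many $I_\theta$'s accumulating only at $q_\sigma^+$, then taking $q_\theta=q_\sigma^-$ (a perfectly legitimate ``order-nearest $F$-point'') for all such $\theta$ gives $\sup_{x\in I_\theta}|f(x)-f(q_\theta)|\to|f(q_\sigma^+)-f(q_\sigma^-)|$, which need not be small. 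So $\Theta_{f,\varepsilon}$ is infinite for a bad choice, and your compactness sketch (``collapse $x^*$ and $q^*$\ldots or manufacture a gap'') cannot go through without first pinning down which of $q_\sigma^\pm$ to use for each $\theta$. Doing this correctly requires analyzing the order type of the $I_\theta$'s within each component $J_\sigma$ and splitting accordingly---work you have not done and which is not routine.

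The paper sidesteps this entirely. Instead of a single point $q_\theta\in F$, it picks $p_\theta\in I_\theta$ and uses a regular extension operator $E_F:C(F)\to C(K)$ (available because $K$ is a compact line) to build a bounded map $P:C(K)\to c_0(\Theta)$, $P(f)(\theta)=(f-E_F(f|_F))(p_\theta)$; the correction on $F$ is then $P^*(v^n)|_F$ with $v^n(\theta)=\nu^n_\theta(I_\theta)$. Because $P$ lands in $c_0(\Theta)$, the sequence $P^*(v^n)$ is automatically weak*-null. What remains is to show $\nu^n-P^*(v^n)$ is weak*-null, and here the paper reduces to \emph{increasing} $f$ (linearly dense in $C(K)$) and uses the telescoping bound $\sum_\theta(f(b_\theta)-f(a_\theta))\le f(\max K)-f(\min K)$ together with Lemma~\ref{thm:inequalityintegral}. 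In effect, the extension operator replaces your Dirac $\delta_{q_\theta}$ by a measure on $F$ that already ``knows'' how to average over the two nearest $F$-points, eliminating the need for a delicate choice.
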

\begin{proof}
Assuming without loss of generality that all $I_\theta$ are nonempty, pick $p_\theta\in I_\theta$,
for each $\theta\in\Theta$. Set $F=K\setminus\bigcup_{\theta\in\Theta}I_\theta$ and note that every limit point
of $\{p_\theta:\theta\in\Theta\}$ is in $F$. Since $F$ is a nonempty closed subset of the compact line $K$,
there exists a regular extension operator $E_F$. Define $P$ as in \eqref{eq:defP}.
Our plan is the following: we will start by defining a certain weak*-null sequence $(v^n)_{n\ge1}$ in $\ell_1(\Theta)\equiv c_0(\Theta)^*$; using $P$, this yields a weak*-null sequence of measures $P^*(v^n)\in M(K)$. Then,
since $\sum_{\theta\in\Theta}\Vert\nu^n_\theta\Vert<+\infty$, it is easily seen that there exists a unique
$\nu^n\in M(K)$ satisfying (a) and $\nu^n\vert_F=P^*(v^n)\vert_F$.

We define $v^n$ by setting $v^n(\theta)=\nu^n_\theta(I_\theta)$. By \eqref{eq:bounded}, the sequence $(v^n)_{n\ge1}$ is bounded in $\ell_1(\Theta)$ and, since $(\nu^n_\theta)_{n\ge1}$ is weak*-null, we have that $v^n(\theta)\to0$ for each $\theta$. Thus, $(v^n)_{n\ge1}$ is weak*-null. As explained above, from $v^n$ we obtain $\nu^n\in M(K)$.
Using (ii) of Lemma~\ref{thm:lemaP}, we obtain:
\[P^*(v^n)(K\setminus F)=\sum_{\theta\in\Theta}v^n(\theta)=\nu^n(K\setminus F)\]
and therefore $\nu^n(K)=P^*(v^n)(K)=0$, by (i) of Lemma~\ref{thm:lemaP}. This proves (b). To prove (c), note that:
\[\Vert\nu^n\Vert=\vert P^*(v^n)\vert(F)+\sum_{\theta\in\Theta}\Vert\nu^n_\theta\Vert,\]
and use (iii) of Lemma~\ref{thm:lemaP}.

Finally, we prove that $(\nu^n)_{n\ge1}$ is weak*-null.
Since $(\nu^n)_{n\ge1}$ is bounded and the increasing functions form a linearly dense subset of $C(K)$
(\cite[Proposition~3.2]{Kubis}), it suffices to show that $\int_Kf\,\dd\nu^n\to0$, for any continuous increasing
function $f:K\to\R$. Recalling that $\big(P^*(v^n)\big)_{n\ge1}$ is weak*-null, it suffices to show that:
\[\lim_{n\to+\infty}\int_Kf\,\dd\big(\nu^n-P^*(v^n)\big)=0.\]
The measure $\nu^n-P^*(v^n)$ vanishes identically on $F$ and, by (ii) of Lemma~\ref{thm:lemaP}, $P^*(v^n)$ equals
$v^n(\theta)\delta_{p_\theta}$ on $I_\theta$; therefore:
\begin{equation}\label{eq:sumtheta}
\int_Kf\,\dd\big(\nu^n-P^*(v^n)\big)
=\sum_{\theta\in\Theta}\int_{I_\theta}f\,\dd\big(\nu^n_\theta-v^n(\theta)\delta_{p_\theta}\big).
\end{equation}
Since $(\nu^n_\theta)_{n\ge1}$ is weak*-null and $v^n(\theta)\to0$, we have that each term
of the sum on the righthand side of \eqref{eq:sumtheta} tends to zero. To conclude the proof, we will
show that, given $\varepsilon>0$, there exists a finite subset $\Phi\subset\Theta$ such that:
\[\Big\vert\sum_{\theta\in\Theta\setminus\Phi}
\int_{I_\theta}f\,\dd\big(\nu^n_\theta-v^n(\theta)\delta_{p_\theta}\big)\Big\vert<\varepsilon,\]
for all $n\ge1$. Write $I_\theta=[a_\theta,b_\theta]$ and $K=[a,b]$. Using that $f$ is increasing, we obtain:
\[\sum_{\theta\in\Theta}\big(f(b_\theta)-f(a_\theta)\big)\le f(b)-f(a)<+\infty;\]
in particular, there exists a finite subset $\Phi$ of $\Theta$ such that $f(b_\theta)-f(a_\theta)\le\varepsilon$, for $\theta\in\Theta\setminus\Phi$. Noting that $(\nu^n_\theta-v^n(\theta)\delta_{p_\theta})(I_\theta)=0$, Lemma~\ref{thm:inequalityintegral} yields:
\begin{align*}
\Big\vert\int_{I_\theta}f\,\dd\big(\nu^n_\theta-v^n(\theta)\delta_{p_\theta}\big)\Big\vert
&\le\frac12\big(f(b_\theta)-f(a_\theta)\big)\Vert\nu^n_\theta-v^n(\theta)\delta_{p_\theta}\Vert\\
&\le\big(f(b_\theta)-f(a_\theta)\big)\Vert\nu^n_\theta\Vert.
\end{align*}
Hence:
\[\Big\vert\sum_{\theta\in\Theta\setminus\Phi}
\int_{I_\theta}f\,\dd\big(\nu^n_\theta-v^n(\theta)\delta_{p_\theta}\big)\Big\vert\le
\sum_{\theta\in\Theta\setminus\Phi}\big(f(b_\theta)-f(a_\theta)\big)\Vert\nu^n_\theta\Vert\le
\varepsilon\sum_{\theta\in\Theta}\Vert\nu^n_\theta\Vert.\]
Having \eqref{eq:bounded} in mind, the conclusion follows.
\end{proof}

\begin{lem}\label{thm:lemadosr}
Let $(r^n_\gamma)_{n\ge1,\gamma\in\Gamma}$ be a family of real numbers, with $\Gamma$ a countable set.
Assume that $\lim_{n\to+\infty}r^n_\gamma=0$, for all $\gamma\in\Gamma$. Then, there exists an increasing
sequence $(\Phi_n)_{n\ge1}$ of finite subsets of $\Gamma$ such that $\Gamma=\bigcup_{n=1}^\infty\Phi_n$ and
$\lim_{n\to+\infty}\sum_{\gamma\in\Phi_n}r^n_\gamma=0$.
\end{lem}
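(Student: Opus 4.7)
The plan is straightforward: exhaust $\Gamma$ by a nested sequence of finite sets that grow \emph{slowly enough} that, by the time $\gamma$ is included in $\Phi_n$, the value $r^n_\gamma$ is already very small. The finite case is trivial (take $\Phi_n=\Gamma$ eventually), so I assume $\Gamma$ is countably infinite and fix an enumeration $\Gamma=\{\gamma_1,\gamma_2,\ldots\}$.

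The key parameter is a nondecreasing sequence of integers $k_n$; I will set $\Phi_n=\{\gamma_1,\ldots,\gamma_{k_n}\}$. For each $k\ge1$, since $r^n_{\gamma_j}\to0$ for each of the finitely many indices $j\le k$, one can choose an integer $N_k$ (and arrange that $N_k<N_{k+1}$ by taking maxima if necessary) such that
\[
|r^n_{\gamma_j}|<\frac{1}{k^2}\quad\text{for all }n\ge N_k\text{ and all }j\in\{1,\ldots,k\}.
\]
Now define $k_n=\max\{k\ge1:N_k\le n\}$ for $n\ge N_1$, and $k_n=1$ otherwise. Since the $N_k$'s are strictly increasing, $k_n$ is nondecreasing and tends to $+\infty$; hence $\Phi_n\subseteq\Phi_{n+1}$ and $\bigcup_{n\ge1}\Phi_n=\Gamma$.

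Finally, for $n\ge N_1$ with $k_n=k$, one has $n\ge N_k$, so $|r^n_{\gamma_j}|<1/k^2$ for each $j\le k_n=k$, and therefore
\[
\Bigl|\sum_{\gamma\in\Phi_n}r^n_\gamma\Bigr|=\Bigl|\sum_{j=1}^{k_n}r^n_{\gamma_j}\Bigr|\le k_n\cdot\frac{1}{k_n^2}=\frac{1}{k_n}\xrightarrow[n\to+\infty]{}0.
\]
There is no real obstacle here; the only point worth noting is that the lemma provides no uniform control on the $r^n_\gamma$'s, so the construction must let $\Phi_n$ enlarge only after each newly admitted coordinate has been verified to be uniformly small from stage $n$ onwards. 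The bound $1/k^2$ is arbitrary — any sequence $\varepsilon_k$ with $k\varepsilon_k\to0$ would work.
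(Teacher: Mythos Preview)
Your proof is correct and follows essentially the same approach as the paper: both fix an enumeration of $\Gamma$, choose strictly increasing thresholds $N_k$, and set $\Phi_n=\{\gamma_1,\ldots,\gamma_{k_n}\}$ where $k_n$ is the largest $k$ with $N_k\le n$. The only cosmetic difference is that the paper bounds the partial sum $\big\vert\sum_{i=0}^kr^n_i\big\vert<1/k$ directly, whereas you bound each term $|r^n_{\gamma_j}|<1/k^2$ separately and then apply the triangle inequality.
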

\begin{proof}
If $\Gamma$ is finite, simply take $\Phi_n=\Gamma$, for all $n\ge1$. If $\Gamma$ is infinite, we can obviously
assume $\Gamma=\omega$. For each $k\ge1$, since $\lim_{n\to+\infty}\sum_{i=0}^kr^n_i=0$,
we can find $N_k\ge1$ such that $\big\vert\sum_{i=0}^kr^n_i\big\vert<\frac1k$ for $n\ge N_k$. Moreover,
we can assume that the sequence $(N_k)_{k\ge1}$ is strictly increasing. Now, for each $n\ge1$, let $\varphi(n)$
be the largest positive integer $k$ such that $N_k\le n$ (set $\varphi(n)=0$, if there is no such $k$).
Clearly $\varphi(n)$ increases with $n$ and, since $\varphi(N_k)\ge k$, we have $\lim_{n\to+\infty}\varphi(n)=+\infty$.
Thus, setting $\Phi_n=\big\{i\in\omega:i\le\varphi(n)\big\}$, we obtain that $(\Phi_n)_{n\ge1}$ is increasing and
$\bigcup_{n=1}^\infty\Phi_n=\omega$. Finally, given $n\ge1$, if $\varphi(n)\ne0$ then $N_{\varphi(n)}\le n$ and therefore:
\[\Big\vert\sum_{i\in\Phi_n}r^n_i\Big\vert=\Big\vert\sum_{i=0}^{\varphi(n)}r^n_i\Big\vert<\frac1{\varphi(n)},\]
proving that $\lim_{n\to+\infty}\sum_{i\in\Phi_n}r^n_i=0$.
\end{proof}

Finally, we can prove Theorem~\ref{thm:juntaLi}.
\begin{proof}[Proof of Theorem~\ref{thm:juntaLi}]
By Corollary~\ref{thm:eraremark}, it suffices to consider a normalized weak*-null sequence $(\mu^n)_{n\ge1}$
in $M(L)$ and obtain a weak*-null sequence $(\tilde\mu^n)_{n\ge1}$ in $M(K)$ such that $\phi_*\tilde\mu^n=\mu^n$,
for all $n\ge1$, and:
\[\limsup_{n\to+\infty}\Vert\tilde\mu^n\Vert\le\lambda.\]
Set $\mu^n_\gamma=\mu^n\vert_{L_\gamma}$;
since $L_\gamma$ is clopen in $L$, the sequence $(\mu^n_\gamma)_{n\ge1}$ is weak*-null in $M(L_\gamma)$.
Using the assumption that $(\phi_\gamma)^*C(L_\gamma)$ has the $\lambda$-\EP\ in $C(K_\gamma)$ and using
Lemma~\ref{thm:technical}, we obtain a weak*-null sequence $(\hat\mu^n_\gamma)_{n\ge1}$ in $M(K_\gamma)$
such that $(\phi_\gamma)_*\hat\mu^n_\gamma=\mu^n_\gamma$ and $\Vert\hat\mu^n_\gamma\Vert\le\lambda\Vert\mu^n_\gamma\Vert$,
for all $n\ge1$.

Let us first handle the trivial case in which $L=\bigcup_{\gamma\in\Gamma}L_\gamma$. In this case,
$K=\bigcup_{\gamma\in\Gamma}K_\gamma$ defines a finite partition of $K$ and
we obtain the measure $\tilde\mu^n\in M(K)$ by setting $\tilde\mu^n\vert_{K_\gamma}=\hat\mu^n_\gamma$, for all $\gamma\in\Gamma$.
Now assume that $L\setminus\bigcup_{\gamma\in\Gamma}L_\gamma$ contains a single point, which we denote by $\infty$.
Let $\Gamma_0$ be a countable subset of $\Gamma$ such that
$\mu^n_\gamma=0$ for all $n\ge1$ and $\gamma\in\Gamma\setminus\Gamma_0$.

Lemma~\ref{thm:colamedida} requires a family $(I_\theta)_{\theta\in\Theta}$ of disjoint clopen intervals. Thus, for each $\gamma\in\Gamma$,
we write the clopen subset $K_\gamma$ of $K$ as a finite disjoint union $K_\gamma=\bigcup_{j=1}^{m_\gamma}I_{\gamma,j}$
of clopen intervals $I_{\gamma,j}$ of $K$. Set:
\[\Theta=\big\{(\gamma,j):\gamma\in\Gamma,\ j=1,\ldots,m_\gamma\big\}.\]
Note that $\bigcup_{\theta\in\Theta}I_\theta=\bigcup_{\gamma\in\Gamma}K_\gamma$ is a proper subset of $K$, its complement
being $\phi^{-1}(\infty)$.
We now need a weak*-null sequence $(\nu^n_\theta)_{n\ge1}$ in $M(I_\theta)$. We start by defining a weak*-null
sequence $(\tilde\mu^n_\gamma)_{n\ge1}$ in $M(K_\gamma)$ and then we set $\nu^n_{\gamma,j}=(\tilde\mu^n_\gamma)\vert_{I_{\gamma,j}}$.

For $\gamma\in\Gamma_0$, set $r^n_\gamma=\sum_{j=1}^{m_\gamma}\vert\hat\mu^n_\gamma(I_{\gamma,j})\vert$. Since $(\hat\mu^n_\gamma)_{n\ge1}$ is weak*-null in $M(K_\gamma)$ and $I_{\gamma,j}$ is clopen in $K_\gamma$, we have that $\lim_{n\to+\infty}r^n_\gamma=0$. Applying Lemma~\ref{thm:lemadosr} to the family $(r^n_\gamma)_{n\ge1,\gamma\in\Gamma_0}$,
we obtain an increasing sequence $(\Phi_n)_{n\ge1}$ of finite subsets of $\Gamma_0$.
For $\gamma\in\Gamma$ and $n\ge1$, we define $\tilde\mu^n_\gamma\in M(K_\gamma)$ as follows.
If $\gamma\in\Phi_n$, we set $\tilde\mu^n_\gamma=\hat\mu^n_\gamma$; if $\gamma\in\Gamma\setminus\Phi_n$,
we take $\tilde\mu^n_\gamma$ to be a Hahn--Banach extension of $\mu^n_\gamma$. Note that, in any case,
we have:
\begin{equation}\label{eq:inanycase}
(\phi_\gamma)_*\tilde\mu^n_\gamma=\mu^n_\gamma
\end{equation}
and $\Vert\tilde\mu^n_\gamma\Vert\le\lambda\Vert\mu^n_\gamma\Vert$. More specifically:
\begin{equation}\label{eq:specifically}
\Vert\tilde\mu^n_\gamma\Vert\le\lambda\Vert\mu^n_\gamma\Vert,\ \text{for $\gamma\in\Phi_n$},\quad
\Vert\tilde\mu^n_\gamma\Vert=\Vert\mu^n_\gamma\Vert,\ \text{for $\gamma\in\Gamma\setminus\Phi_n$}.
\end{equation}
Note also that $(\tilde\mu^n_\gamma)_{n\ge1}$ is indeed a weak*-null sequence in $M(K_\gamma)$, for all $\gamma\in\Gamma$;
namely, the sequence is zero for $\gamma\not\in\Gamma_0$ and, for $\gamma\in\Gamma_0$, we have
$\tilde\mu^n_\gamma=\hat\mu^n_\gamma$, for $n$ sufficiently large.

We are now ready to apply Lemma~\ref{thm:colamedida}. Observe that:
\[\sum_{\theta\in\Theta}\Vert\nu^n_\theta\Vert=\sum_{\gamma\in\Gamma}\Vert\tilde\mu^n_\gamma\Vert
\le\lambda\sum_{\gamma\in\Gamma}\Vert\mu^n_\gamma\Vert\le\lambda\Vert\mu^n\Vert=\lambda,\]
so that assumption \eqref{eq:bounded} holds. Let $(\nu^n)_{n\ge1}$ be the weak*-null sequence in $M(K)$ given by the
lemma. It follows from (a) that:
\begin{equation}\label{eq:followsfroma}
\nu^n\vert_{K_{\gamma}}=\tilde\mu^n_\gamma.
\end{equation}
Moreover, it follows from (c) that:
\begin{equation}\label{eq:chata1}
\Vert\nu^n\Vert\le\sum_{\gamma\in\Gamma}\Vert\tilde\mu^n_\gamma\Vert
+\sum_{(\gamma,j)\in\Theta}\vert\tilde\mu^n_\gamma(I_{\gamma,j})\vert.
\end{equation}
We claim that $\limsup_{n\to+\infty}\Vert\nu^n\Vert\le\lambda$. Using \eqref{eq:specifically} we obtain:
\begin{gather}
\label{eq:chata2}\sum_{\gamma\in\Gamma}\Vert\tilde\mu^n_\gamma\Vert\le\lambda\sum_{\gamma\in\Phi_n}\Vert\mu^n_\gamma\Vert
+\sum_{\gamma\in\Gamma\setminus\Phi_n}\Vert\mu^n_\gamma\Vert,\\
\label{eq:chata3}\sum_{(\gamma,j)\in\Theta}\vert\tilde\mu^n_\gamma(I_{\gamma,j})\vert\le\sum_{\gamma\in\Phi_n}r^n_\gamma+
\sum_{\gamma\in\Gamma\setminus\Phi_n}\Vert\mu^n_\gamma\Vert.
\end{gather}
From \eqref{eq:chata1}, \eqref{eq:chata2}, \eqref{eq:chata3}, and the fact that $\lambda\ge2$, it follows that:
\[\Vert\nu^n\Vert\le\lambda\sum_{\gamma\in\Gamma}\Vert\mu^n_\gamma\Vert+\sum_{\gamma\in\Phi_n}r^n_\gamma
\le\lambda+\sum_{\gamma\in\Phi_n}r^n_\gamma.\]
Since $\lim_{n\to+\infty}\sum_{\gamma\in\Phi_n}r^n_\gamma=0$, the claim is proved.

Finally, pick $p\in\phi^{-1}(\infty)$ and set $\tilde\mu^n=\nu^n+\mu^n(L)\delta_p$. As $\mu^n(L)\to0$,
we have that $\limsup_{n\to+\infty}\Vert\tilde\mu^n\Vert\le\lambda$ and that $(\tilde\mu^n)_{n\ge1}$ is weak*-null.
It remains to check that $\phi_*\tilde\mu^n=\mu^n$, for all $n\ge1$. Note that, for all $\gamma\in\Gamma$:
\[(\phi_*\tilde\mu^n)\vert_{L_\gamma}=(\phi_*\nu^n)\vert_{L_\gamma}=(\phi_\gamma)_*(\nu^n\vert_{K_\gamma})
\stackrel{\text{\eqref{eq:followsfroma}}}=(\phi_\gamma)_*(\tilde\mu^n_\gamma)\stackrel{\text{\eqref{eq:inanycase}}}=
\mu^n_\gamma=\mu^n\vert_{L_\gamma}.\]
Thus $\phi_*\tilde\mu^n$ and $\mu^n$ agree on all Borel subsets of $\bigcup_{\gamma\in\Gamma}L_\gamma$.
Since the complement of $\bigcup_{\gamma\in\Gamma}L_\gamma$ in $L$ has just one point,
the proof will be concluded if we check that $(\phi_*\tilde\mu^n)(L)=\mu^n(L)$.
By (b) of Lemma~\ref{thm:colamedida},
$\nu^n(K)=0$ and hence:
\[(\phi_*\tilde\mu^n)(L)=\tilde\mu^n(K)=\mu^n(L).\qedhere\]
\end{proof}

\end{section}


\begin{thebibliography}{99}


\bibitem{ArgyrosLondon} S. A. Argyros, J. F. Castillo, A. S. Granero, M. Jim\'enez \& J. P. Moreno,
Complementation and embeddings of $c_0(I)$ in Banach spaces, {\em Proc.\ London.\ Math.\ Soc.}, 85 (3), 2002, pgs.\ 742---768.

\bibitem{Taras} T. Banakh \& A. Leiderman, Uniform Eberlein compactifications of metrizable spaces, {\em Topol.\ Appl.},
159 (7), 2012, pgs.\ 1691---1694.

\bibitem{JesusWeak} J. M. F. Castillo, M. Gonz\'alez \& P. L. Papini, On weak*-extensible Banach spaces, {\em Nonlinear analysis},
75, 2012, pgs.\ 4936---4941.

\bibitem{Corson} H. H. Corson, The weak topology of a Banach space, {\em Trans.\ Amer.\ Math.\ Soc.}, 101 (1), 1961, pgs.\ 1---15.

\bibitem{KK} O. Kalenda \& W. Kubi\'s, Complementation in spaces of continuous functions on compact lines, {\em J. Math.\ Anal.\ Appl.} 386, 2012, pgs.\ 241---257.

\bibitem{Kubis} W. Kubi\'s, Linearly ordered compacta and Banach spaces with a projectional resolution of the identity,
{\em Topology Appl.} 154 (3), 2007, pgs.\ 749---757.

\bibitem{Molto} A. Molt\'o, On a theorem of Sobczyk, {\em Bull.\ Aust.\ Math.\ Soc.}, 43 (1), 1991, pgs.\ 123---130.

\bibitem{Patterson} W. M. Patterson, Complemented $c_0$-subspaces of a non-separable $C(K)$-space,
{\em Canad.\ Math.\ Bull.} 36 (3), 1993, pgs.\ 351---357.

\bibitem{Phillips} R. S. Phillips, On linear transformations, {\em Trans.\ Amer.\ Math.\ Soc.}, 48, 1940,
pgs.\ 516---541.

\bibitem{WCGnother} H. P. Rosenthal, The heredity problem for weakly compactly generated Banach spaces,
{\em Compositio  Math.}, 28, 1974, pgs.\ 83---111.

\bibitem{c0sum} F. C. S\'anchez, Yet another proof of Sobczyk's theorem, Methods in Banach Space Theory,
Edited by J. M. F. Castillo \& W. B. Johnson, London Mathematical Society Lecture Note Series no.\ 337, Cambridge, 2006.


\bibitem{Sobczyk} A. Sobczyk, Projection of the space $(m)$ on its subspace $(c_0)$, {\em Bull.\ Amer.\
Math.\ Soc.}, 47, 1941, pgs.\ 938---947.

\bibitem{Veech} W. A. Veech, Short proof of Sobczyk's theorem, {\em Proc.\ Amer.\ Math.\ Soc.}, 28, 1971, pgs.\ 627---628.


\bibitem{WZQ} B. Wang, Y. Zhao \& W. Qian, On the weak-star extensibility, {\em Nonlinear Anal.}, 74, 2011, pgs.\ 2109---2215.

\bibitem{Zippin} M. Zippin, The embedding of Banach spaces into spaces with structure, {\em Illinois J. Math.}, 34 (3), 1990,
pgs.\ 586---606.

\end{thebibliography}
\end{document}